\newcommand{\norm}[1]{\left \lVert#1\right \rVert}
\newtheorem{thm}{Theorem}[section]
\newtheorem{lem}{Lemma}[section]
\newtheorem{assumption}{Assumption}[section]
\theoremstyle{definition}
\newtheorem{defn}{Definition}[section]
\theoremstyle{remark}
\newtheorem{remark}{Remark}[section]
\definecolor{cucol}{rgb}{0,0,0.8}
\definecolor{afcol}{rgb}{1,0,0}
\numberwithin{equation}{section}
\begin{document}
	
%	\begin{frontmatter}
		
		\title{Asymptotic separation of solutions to fractional stochastic multi-term differential equations}
		
		%%this line removes the date, but space is still left for it;
		%if used, remove the \vspace{-1cm}
		\date{}
		
		%this gives the date in the form Mon 30 Jan 2012, 8:57pm;
		%if used, retain the \vspace{-1cm}
		%\date{\shortdayofweekname{\day}{\month}{\year}{ }\mydate\today}

		%\author[]{Arzu Ahmadova\corref{cor1}}
		%\ead{arzu.ahmadova@emu.edu.tr}
		%\author[]{Nazim I. Mahmudov}
		%\ead{nazim.mahmudov@emu.edu.tr}
		%\cortext[cor1]{Corresponding author}

		%\author[]{Arzu Ahmadova}
		\author[]{Arzu Ahmadova \thanks{Email: \texttt{arzu.ahmadova@emu.edu.tr}}}
		%\ead{arzu.ahmadova@emu.edu.tr}
		\author[]{Nazim I. Mahmudov\thanks{ Corresponding author. Email: \texttt{nazim.mahmudov@emu.edu.tr}}}
		%\author[]{Nazim.I. Mahmudov}		
		%\ead{nazim.mahmudov@emu.edu.tr}
		%\cortext[cor1]{Corresponding author}

		\affil{Department of Mathematics, Eastern Mediterranean University, Gazimagusa, 99628, TRNC, Mersin 10, Turkey}
		
		% Latex won't make the title unless told:
		
		\maketitle
		
		%%to remove the space left for date, use:

		\begin{abstract}
			\noindent In this paper, we study the exact asymptotic separation rate of two distinct solutions of Caputo stochastic multi-term differential equations (Caputo SMTDEs for short). Our goal in this paper is to establish results on the global existence and uniqueness and continuity dependence of the initial values of the solutions to Caputo SMTDEs with non-permutable matrices of order $\alpha \in (\frac{1}{2},1)$ and $\beta \in (0,1)$ whose coefficients satisfy a standard Lipschitz condition. For this class of systems, we then show the asymptotic separation property between two different solutions of Caputo SMTDEs with a more general condition based on $\lambda$. Also, the asymptotic separation rate for the two distinct mild solutions reveals that our asymptotic results are general.

		\textit{Keywords:}  Asymptotic separation, Caputo stochastic multi-term differential equations, existence and uniqueness, continuous dependence on initial conditions, non-permutable matrices
	\end{abstract}
	
%	\begin{keyword}
			%Caputo stochastic multi-term differential equations \sep non-permutable matrices \sep existence and uniqueness \sep continuous dependence on initial conditions \sep asymptotic behavior
		
	%	\end{keyword}  
		
	%\end{frontmatter}
\section{Introduction}\label{Sec:intro}

Over the years, many results on the theory and applications of stochastic differential equations (SDEs) have been studied \cite{oksendal, ito,prato}. In particular, fractional stochastic differential equations (FSDEs), which are a generalisation of differential equations by using fractional and stochastic calculus are becoming more popular due to their applications in modeling and financial mathematics. The nonlinear system of FDEs has been studied from various points of view: applications to population dynamics, optimal pricing in economics, and recent COVID-19 epidemics. Recently, FSDEs have been extensively used for modeling mathematical problems in finance \cite{tien,farhadi}, dynamics of complex systems in engineering \cite{wang} and other areas \cite{gangaram,mandelbrot}. Most results on fractional stochastic dynamical systems are limited to proving the existence and uniqueness of mild solutions using the fixed point theorem \cite{arzu}-\cite{shen}. A study on different types of stability studies for FSDEs can be found in \cite{luo2,mao2,wangj}.% Moreover, the stability theory for stochastic differential equations has been considered by means of exponential functions which lead to attain more efficient consequences in \cite{mao1}. 

Using  fractional  derivatives  instead  of  integer-order  derivatives  allows  for  the  modelling  of  a  wider  variety  of  behaviours. But  sometimes,  SDEs  involving  one  fractional  order  of  differentiation  are  not  sufficient  to  describe  physical  processes.  Therefore,  recently,  several  authors  have  studied  more  general  types  of  fractional-order stochastic models,  such  as  multi-term  equations to get analytical and numerical approximation results. For instance, the authors in \cite{arzu-sle} have studied Euler-Maruyama scheme for fractional stochastic Langevin multi-term equations and introduced a general form of FSLEs together with strong convergence rate of a numerical mild solution.

Among the many scientific articles on asymptotic behaviour and asymptotic separation of fractional stochastic differential equations, we will mention only a few that motivate this work:
\begin{itemize}
	\item A few works on asymptotic separation of two distinct solutions to fractional stochastic differential equations which can also be found in \cite{doan}. Similar work on an exact asymptotic separation rate of two distinct solutions of doubly singular stochastic Volterra integral equations (SVIEs) with two different initial values was discussed in \cite{li-hu}.
	\item Results on the asymptotic behavior of solutions of fractional differential equations with fractional time derivative of Caputo type are relatively rare in the literature. In \cite{cong}, Cong et al. study the asymptotic behavior of solutions of the perturbed linear fractional differential system. Cong et al. \cite{stefan} proved the theorem of linearized asymptotic stability for fractional differential equations. 
	\item The authors in \cite{arzu_asymptoticstability} studied the existence and asymptotic stability at the $p$-th moment of a mild solution for a class of nonlinear fractional neutral stochastic differential equations. The results are obtained with the help of the theory of fractional differential equations, some properties of Mittag-Leffler functions and its asymptotic analysis under the assumption that the corresponding fractional stochastic neutral dynamical system is asymptotically stable. The similar asymptotic stability result at the $p$-th moment of a mild solution of nonlinear impulsive stochastic differential equations was discussed in \cite{mahmudov-sakthivel,luo1}.

\end{itemize}

While there are several papers on deterministic fractional multi-term differential equations (FMTDEs) with constant and variable coefficients (see \cite{arzu-ismail,arran-joel}), there are few papers dealing with stochastic Caputo stochastic multi-term differential equations (Caputo SMTDEs) involving non-permutable matrices. The lack of asymptotic separation of the solutions of this class of Caputo SMTDEs motivates us to develop new results on asymptotic analysis. The main contributions of our work are described in detail below:
\begin{enumerate}
	\item To the best of our knowledge, we study asymptotic separation between two distinct mild solutions rather than integral equations. This is a lucky consequence which forms an interesting result in its own right.
	\item We consider more general Caputo-FSDEs with non-permutable matrices under the weaker condition $\lambda > \alpha$, which is true even in the special case when $\beta=0$ and $A, B$ are equal to zero matrices, than the condition represented in \cite{doan}. With respect to this condition, the asymptotic distance between solutions is greater than $t^{-\alpha-\epsilon}$ as $t \to \infty$ for any $\epsilon > 0$.
	\item  We obtain a bound for the leading coefficient of the asymptotic separation rate for the two distinct solutions which reveals that our asymptotic results are general. 
\end{enumerate}

Hence, the plan of this paper is systematized as follows: Section \ref{Sec:prel} is an introductory section in which we recall the main definitions, results from fractional calculus, and necessary lemmas from fractional differential equations, and in Section \ref{sec:3} we review to the framework for the main results of the theory. Section \ref{stochastic} is devoted to proving global existence and uniqueness and continuity dependence on the initial values of the solutions of Caputo SMTDEs of order $\alpha \in (\frac{1}{2},1)$ and $\beta \in (0,1)$ with non-permutable matrices. In Section \ref{asymptotic}, we investigate new results on the asymptotic behavior of solutions of the Caputo SMTDE by studying an asymptotic separation between two different solutions. In Section \ref{sec:example} we present an example to verify the results proved in Section \ref{stochastic} and \ref{asymptotic}. Section \ref{concl} is for the conclusion.

\section{Mathematical preliminaries}\label{Sec:prel}

We  embark  on  this  section  by  briefly  introducing  the  essential  structure  of  fractional  calculus  and  fractional  differential operators.  For  the  more  salient details  on  these  matters,  see  the  textbooks \cite{podlubny},\cite{miller-ross}-\cite{oldham-spanier}.  Note  that  none  of  the  results  in  this  section  are  new,  except Definition 2.3 and 2.4 (they are recently defined in \cite{nai} and \cite{arran-ozarslan}, respectively)
at the end, which we place in this section since a representation of solution is the Mittag-Leffler type function and it will be used later in the paper. 

\begin{defn}[\cite{podlubny,kilbas}]
	The Riemann--Liouville integral operator of fractional order $\alpha>0$ is defined by
	\begin{equation}
	\label{RLint:def}
	I^{\alpha}_{0+}g(t)=\frac{1}{\Gamma(\alpha)}\int_0^t(t-r)^{\alpha-1}g(r)\,\mathrm{d}r \\,\quad \text{for} \quad t>0.
	\end{equation}
	The Riemann--Liouville derivative operator of fractional order $\alpha>0$ is defined by
	\begin{equation}
	D^{\alpha}_{0+}g(t)=\frac{\mathrm{d}^n}{\mathrm{d}t^n}\left(I^{n-\alpha}_{0+}g(t)\right), \quad \text{where}\quad n-1<\alpha \leq n.
	\end{equation}
\end{defn}

\begin{defn}[\cite{podlubny}]
	Suppose that $\alpha>0$, $t>0$. The Caputo derivative operator of fractional order $\alpha$ is defined by:
	\begin{equation*}
	\prescript{C}{}D^{\alpha}_{0+}g(t)=\prescript{}{}I^{n-\alpha}_{0+}\left(\frac{\mathrm{d}^n}{\mathrm{d}t^n}g(t)\right), \quad \text{where}\quad n-1<\alpha \leq  n.
	\end{equation*}
	In particular, for $\alpha \in (0,1)$
	\begin{equation*} 
	\prescript{}{}I^{\alpha}_{0^{+}}\prescript{C}{}D^{\alpha}_{0^{+}}f(t)=f(t)-f(0).
	\end{equation*}
\end{defn}
The Riemann--Liouville fractional integral operator and the Caputo fractional derivative have the following property for $\alpha \geq 0$ \cite{podlubny,kilbas}:
\begin{equation} \label{ID} \prescript{}{}I^{\alpha}_{0+}(\prescript{C}{}D^{\alpha}_{0+}g(t))=g(t)-\sum_{k=0}^{n-1}\frac{t^{k}g^{(k)}(0)}{\Gamma(k+1)}.
\end{equation}
The relationship between the Riemann--Liouville and Caputo fractional derivatives are as follows:
\begin{equation} \label{eq:relation}
\prescript{C}{}D^{\alpha}_{0+}g(t)=\prescript{}{}D^{\alpha}_{0+}g(t)-\sum_{k=0}^{n-1}\frac{t^{k-\alpha}g^{(k)}(0)}{\Gamma(k-\alpha+1)}, \quad \alpha\geq 0.
\end{equation}

\begin{defn}[\cite{nai}]
	We define a new Mittag-Leffler type function $ \mathscr{E}_{\alpha,\beta,\gamma}^{A,B}(\cdot) :\mathbb{R}\to \mathbb{R}$ generated by nonpermutable matrices $A,B\in\mathbb{R}^{n\times n}$ as follows:
	\begin{equation}
	\label{ML4}
	\mathscr{E}_{\alpha,\beta,\delta}^{A,B}(t)\coloneqq\sum_{k=0}^{\infty}\sum_{m=0}^{\infty}\mathcal{Q}_{k,m}^{A,B}\frac{t^{k\alpha+m\beta}}{\Gamma(k\alpha+m\beta+\delta)}, \quad \alpha,\beta>0, \quad \delta \in \mathbb{R},
	\end{equation}
	where $\mathcal{Q}_{k,m}^{A,B}\in \mathbb{R}^{n\times n} $, $k,m\in\mathbb{N}_{0}:\coloneqq \mathbb{N}\cup \left\lbrace 0 \right\rbrace $ is given by
	
	\begin{equation}\label{important}
	\mathcal{Q}_{k,m}^{A,B}\coloneqq\sum_{l=0}^{k} A^{k-l}B\mathcal{Q}_{l,m-1}^{A,B}, \quad  k,m\in\mathbb{N}, \qquad \mathcal{Q}_{k,0}^{A,B}\coloneqq A^{k}, \quad k\in\mathbb{N}_{0}, \qquad \mathcal{Q}_{0,m}^{A,B}\coloneqq B^{m}, \quad m\in\mathbb{N}_{0}.
	\end{equation}
\end{defn}
An explicit representation of  $\mathcal{Q}_{k,m}^{A,B}$ can be found in Table 1 in \cite{nai}. In the case of permutable matrices, i.e. $AB = BA$, we have
$\mathcal{Q}_{k,m}^{A,B}:=\binom{k+m}{m}A^{k}B^{m}$, $k,m\in\mathbb{N}_{0}$.
	
\begin{defn}[\cite{arran-ozarslan}]\label{Def:bML}
	We consider the Mittag-Leffler type function involving permutable matrices
	\begin{equation}
	\label{bML4}
	t^{\delta-1}E_{\alpha,\beta,\delta}(B t^{\alpha},A t^{\beta})=\sum_{k=0}^{\infty}\sum_{m=0}^{\infty}\binom{k+m}{m}\frac{A^k B^m}{\Gamma(k\alpha+m\beta+\delta)} t^{m\alpha+k\beta+\delta-1}.
	\end{equation}
\end{defn}

The following results are often used to compute estimations in Section \ref{stochastic} and \ref{asymptotic}.
\begin{lem} \label{lemtech}
	For all $\omega, t>0$, and $\alpha \in (\frac{1}{2}, 1)$ the following inequality holds:
	\begin{equation*}
	\frac{\omega}{\Gamma(2\alpha-1)}\int_{0}^{t}(t-r)^{2\alpha-2}E_{2\alpha-1}(\omega r^{2\alpha-1})\mathrm{d}r \leq E_{2\alpha-1}(\omega t^{2\alpha-1}).
	\end{equation*}
\end{lem}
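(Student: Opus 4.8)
The plan is to evaluate the left-hand side exactly rather than merely to bound it. Writing $\mu := 2\alpha - 1$, the hypothesis $\alpha \in (\tfrac{1}{2}, 1)$ gives $\mu \in (0,1)$, and since $2\alpha - 2 = \mu - 1$ the left-hand side is precisely $\omega$ times the Riemann--Liouville fractional integral $I^{\mu}_{0+}$ applied to the map $r \mapsto E_{\mu}(\omega r^{\mu})$. I expect this integral to reproduce the Mittag-Leffler function up to its constant term, which is exactly the source of the claimed inequality.

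Concretely, I would first insert the power series $E_{\mu}(\omega r^{\mu}) = \sum_{n=0}^{\infty} \omega^{n} r^{\mu n}/\Gamma(\mu n + 1)$ into the integral. Because $\omega, t > 0$ and every term is nonnegative on $[0,t]$, Tonelli's theorem justifies interchanging the summation and the integration. Each resulting integral is a Beta integral,
\begin{equation*}
\int_{0}^{t}(t-r)^{\mu-1} r^{\mu n}\,\mathrm{d}r = t^{\mu(n+1)}\,\frac{\Gamma(\mu)\,\Gamma(\mu n + 1)}{\Gamma(\mu(n+1)+1)},
\end{equation*}
so that after cancelling the factors $\Gamma(\mu)$ and $\Gamma(\mu n + 1)$ the left-hand side collapses to
\begin{equation*}
\sum_{n=0}^{\infty}\frac{\omega^{n+1} t^{\mu(n+1)}}{\Gamma(\mu(n+1)+1)} = \sum_{j=1}^{\infty}\frac{\omega^{j} t^{\mu j}}{\Gamma(\mu j + 1)} = E_{\mu}(\omega t^{\mu}) - 1,
\end{equation*}
upon re-indexing with $j = n+1$ and identifying the missing $j = 0$ term.

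The inequality then follows immediately, since $E_{\mu}(\omega t^{\mu}) - 1 \le E_{\mu}(\omega t^{\mu})$; in fact the argument shows the bound holds with the explicit deficit $1$ on the left. There is no serious obstacle here: the only points requiring care are the justification of the term-by-term integration (handled by nonnegativity of the integrand together with Tonelli, or equivalently by local uniform convergence of the Mittag-Leffler series) and the bookkeeping of the Gamma-function factors in the Beta integral. I would double-check the exponent $\mu(n+1)$ and the shift of the summation index, since these are the places where an off-by-one error could most easily creep in.
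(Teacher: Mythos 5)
Your proof is correct and follows exactly the route the paper sketches: insert the series representation of $E_{2\alpha-1}$, evaluate each term as a Beta integral, and interchange summation and integration (which the paper asserts and you justify via Tonelli). In fact your computation yields the sharper identity that the left-hand side equals $E_{2\alpha-1}(\omega t^{2\alpha-1})-1$, from which the stated inequality is immediate.
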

\begin{proof}
	Applying the series representation of Mittag-Leffler function and definition of beta function, then by swapping summation and integration, we get the desired result.
\end{proof} 
The following lemma plays a necessary role on proofs of main results in Section \ref{stochastic} and \ref{asymptotic}.

\begin{lem}[H\"{o}lder's inequality]
	Suppose that $\alpha_1$,$\alpha_2 >1$ and $\frac{1}{\alpha_1}+\frac{1}{\alpha_2}=1$. If $|f(t)|^{\alpha_1}$, $|g(t)|^{\alpha_2}\in L^{1}(\Omega)$, then $|f(t)g(t)|\in L^{1}(\Omega)$ and 
	\begin{equation*}
	\int_{\Omega}|f(t)g(t)|\mathrm{d}t \leq \left( \int_{\Omega}|f(t)|^{\alpha_1}\mathrm{d}t\right)^{\frac{1}{\alpha_1}} \left(\int_{\Omega}|g(t)|^{\alpha_2}\mathrm{d}t\right)^{\frac{1}{\alpha_2}},
	\end{equation*}
	where $L^{1}(\Omega)$ represents the Banach space of all Lebesgue measurable functions $f:\Omega \to \mathbb{R}$ with \\
	$\int_{\Omega}|f(t)|\mathrm{d}t < \infty$. Especially, when $\alpha_1=\alpha_2=2$, the H\"{o}lder's inequality  reduces to the Cauchy-Schwartz  inequality 
	\begin{equation}\label{cauchy}
	\left( \int_{\Omega}|f(t)g(t)|\mathrm{d}t \right) ^{2}\leq \int_{\Omega}|f(t)|^{2}\mathrm{d}t  \int_{\Omega}|g(t)|^{2}\mathrm{d}t.
	\end{equation} 
\end{lem}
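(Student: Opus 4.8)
The plan is to obtain this inequality from Young's inequality, which itself is a consequence of the concavity of the logarithm (equivalently, the convexity of the exponential). The result is entirely classical, so the work lies in assembling the standard ingredients in the right order rather than in any single clever idea.

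First I would establish Young's inequality: for nonnegative reals $a,b$ and conjugate exponents $\alpha_1,\alpha_2>1$ with $\frac{1}{\alpha_1}+\frac{1}{\alpha_2}=1$,
\begin{equation*}
ab \leq \frac{a^{\alpha_1}}{\alpha_1} + \frac{b^{\alpha_2}}{\alpha_2}.
\end{equation*}
For $a,b>0$ this follows by writing $\log(ab)=\frac{1}{\alpha_1}\log(a^{\alpha_1})+\frac{1}{\alpha_2}\log(b^{\alpha_2})$ and invoking the concavity of $\log$ together with $\frac{1}{\alpha_1}+\frac{1}{\alpha_2}=1$, so that $\log(ab)\leq \log\!\big(\frac{a^{\alpha_1}}{\alpha_1}+\frac{b^{\alpha_2}}{\alpha_2}\big)$; exponentiating gives the claim, and the cases $a=0$ or $b=0$ are immediate.

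Next I would dispose of the degenerate cases before normalizing. If $\int_{\Omega}|f(t)|^{\alpha_1}\,\mathrm{d}t=0$ then $f=0$ almost everywhere and both sides vanish, and symmetrically for $g$; if either integral is infinite the right-hand side is infinite and nothing remains to prove. Hence I may assume $0<F\coloneqq\big(\int_{\Omega}|f(t)|^{\alpha_1}\,\mathrm{d}t\big)^{1/\alpha_1}<\infty$ and $0<G\coloneqq\big(\int_{\Omega}|g(t)|^{\alpha_2}\,\mathrm{d}t\big)^{1/\alpha_2}<\infty$. Applying Young's inequality pointwise with $a=|f(t)|/F$ and $b=|g(t)|/G$ yields
\begin{equation*}
\frac{|f(t)g(t)|}{FG} \leq \frac{|f(t)|^{\alpha_1}}{\alpha_1 F^{\alpha_1}} + \frac{|g(t)|^{\alpha_2}}{\alpha_2 G^{\alpha_2}}.
\end{equation*}
The right-hand side is integrable, which simultaneously certifies that $|f(t)g(t)|\in L^{1}(\Omega)$; integrating over $\Omega$ and using $\int_{\Omega}|f|^{\alpha_1}=F^{\alpha_1}$ and $\int_{\Omega}|g|^{\alpha_2}=G^{\alpha_2}$ gives $\frac{1}{FG}\int_{\Omega}|fg|\leq \frac{1}{\alpha_1}+\frac{1}{\alpha_2}=1$, i.e. $\int_{\Omega}|fg|\leq FG$, which is exactly the asserted bound. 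The Cauchy--Schwarz case \eqref{cauchy} then follows by setting $\alpha_1=\alpha_2=2$ and squaring.

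I do not expect a genuine obstacle here, since the statement is standard; the only points demanding care are the bookkeeping of the degenerate cases (zero or infinite norms) and confirming the measurability and integrability of the product $fg$, both of which are handled by the pointwise Young estimate rather than by any additional argument.
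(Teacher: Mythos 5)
Your proof is correct, and it is the standard derivation of H\"{o}lder's inequality: establish Young's inequality $ab \leq \frac{a^{\alpha_1}}{\alpha_1} + \frac{b^{\alpha_2}}{\alpha_2}$ from concavity of the logarithm, dispose of the degenerate cases, normalize by $F$ and $G$, and integrate the pointwise estimate. Note that the paper itself states this lemma as classical background and gives no proof at all, so there is no argument of the authors to compare yours against; your write-up simply supplies the standard textbook proof that the paper implicitly relies on. Two minor observations: the case of infinite integrals that you set aside cannot actually occur, since the hypothesis $|f|^{\alpha_1}, |g|^{\alpha_2} \in L^{1}(\Omega)$ already forces both integrals to be finite, so only the vanishing case requires separate treatment; and the measurability of the product $fg$ follows directly from the measurability of $f$ and $g$, whereas the pointwise Young estimate is what certifies its integrability --- these are two separate points, and your closing sentence slightly conflates them. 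Neither affects the validity of the argument.
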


\begin{lem}[Jensen's inequality]
	Let $n \in \mathbb{N}$, $q>1$ and $x_{i}\in\mathbb{R}_{+}$, $i=1,2,\ldots,n$. Then, the following inequality holds true:
	\begin{equation*}
	\|\sum_{i=1}^{n}x_{i}\|^{q} \leq n^{q-1}\sum_{i=1}^{n}\|x_{i}\|^{q}.
	\end{equation*}
	In particular, we consider the following inequality with $q=2$ within the estimations on this paper:
	\begin{equation}\label{ineqq}
	\|\sum_{i=1}^{n}x_{i}\|^{2} \leq n\sum_{i=1}^{n}\|x_{i}\|^{2}.
	\end{equation}
\end{lem}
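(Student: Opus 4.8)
The plan is to prove the general inequality for $q>1$ first and then read off the stated case $q=2$ by substitution. Since the $x_i$ are nonnegative reals, I would begin by noting that $\|x_i\|=x_i$ and $\|\sum_{i=1}^n x_i\|=\sum_{i=1}^n x_i$, so that the claim reduces to the purely scalar estimate $\left(\sum_{i=1}^n x_i\right)^q \le n^{q-1}\sum_{i=1}^n x_i^q$. If one instead reads the statement in a general normed space, one would first apply the triangle inequality $\|\sum_{i} x_i\|\le \sum_{i} \|x_i\|$ and then run the scalar argument on the numbers $\|x_i\|\ge 0$.

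For the scalar estimate I would use the convexity of the map $\phi(t)=t^q$ on $[0,\infty)$. Since $\phi''(t)=q(q-1)t^{q-2}\ge 0$ for $q>1$, the function $\phi$ is convex, so the finite Jensen inequality with the uniform weights $\lambda_i=1/n$ (which satisfy $\lambda_i\ge 0$ and $\sum_i \lambda_i=1$) gives
\begin{equation*}
\left(\frac{1}{n}\sum_{i=1}^n x_i\right)^q=\phi\!\left(\sum_{i=1}^n \lambda_i x_i\right)\le \sum_{i=1}^n \lambda_i\,\phi(x_i)=\frac{1}{n}\sum_{i=1}^n x_i^q.
\end{equation*}
Multiplying both sides by $n^q$ yields exactly $\left(\sum_{i=1}^n x_i\right)^q\le n^{q-1}\sum_{i=1}^n x_i^q$, which is the claim.

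An alternative route, which has the advantage of invoking a tool already recorded in the paper, is to use the discrete form of H\"older's inequality (the counting-measure analogue of the preceding lemma): writing $\sum_{i=1}^n x_i=\sum_{i=1}^n 1\cdot x_i$ and applying H\"older with the conjugate exponents $\alpha_1=\frac{q}{q-1}$ and $\alpha_2=q$ gives $\sum_{i=1}^n x_i\le n^{(q-1)/q}\bigl(\sum_{i=1}^n x_i^q\bigr)^{1/q}$; raising this to the $q$-th power reproduces the same bound. Finally, setting $q=2$ gives $n^{q-1}=n$ and recovers the special case \eqref{ineqq}.

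There is essentially no serious obstacle here, since this is a standard consequence of convexity; the only points requiring a moment's care are the justification that $\phi(t)=t^q$ is genuinely convex for every $q>1$ (handled by the second-derivative test above) and the reconciliation of the norm notation with the hypothesis $x_i\in\mathbb{R}_+$, which is why I would insert the reduction to scalars at the very start.
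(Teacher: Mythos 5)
Your proof is correct. The paper states this lemma as a standard preliminary and gives no proof of its own, so there is no argument to compare against; your reduction to the scalar estimate $\left(\sum_{i=1}^{n}x_{i}\right)^{q}\leq n^{q-1}\sum_{i=1}^{n}x_{i}^{q}$, followed by Jensen's inequality for the convex function $\phi(t)=t^{q}$ with uniform weights $\lambda_{i}=1/n$, is the canonical derivation, and setting $q=2$ indeed recovers \eqref{ineqq}. The alternative route you sketch through the discrete H\"older inequality with conjugate exponents $\frac{q}{q-1}$ and $q$ is equally valid, and has the pleasant feature of tying this lemma to the H\"older/Cauchy--Schwarz lemma that the paper records immediately before it; either argument would serve as a complete proof here.
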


\section{Formulation of main problem} \label{sec:3}
In this section, we resort main assumptions which will be used in throughout the Section \ref{stochastic}. We consider a Caputo stochastic multi-term differential equation of order $\alpha\in (\frac{1}{2},1)$ and $\beta \in (0,1)$ with non-permutable matrices has the following form
\begin{equation}\label{fstoc}
\begin{cases}
\left( \prescript{C}{}D_{0^{+}}^{\alpha}X \right) (t)-A \left( \prescript{C}{}D_{0^{+}}^{\beta} X \right) (t)-B X(t)=b(t,X(t))+\sigma(t, X(t))\frac{\mathrm{d}W(t)}{\mathrm{d}t}, \\
X(0)=\eta,
\end{cases}
\end{equation}
The coefficients $b,\sigma : [0,T] \times \mathbb{R}^{n} \to \mathbb{R}^{n}$ are measurable and bounded functions. $A,B \in \mathbb{R}^{n\times n}$ are non-permutable matrices. 
We introduce the norm of the matrix which are used throughout this paper. For any matrix $A=(\,a_{ij})\,_{n\times n} \in \mathbb{R}^{n\times n}$, the norm of the matrix $A$, according to the maximum norm on $\mathbb{R}^{n}$ is $\|A\|=\max_{1\leq i\leq  n}\sum_{j=1}^{n}|a_{ij}|$.
Moreover, let $( W_{t})_{t\geq 0} $ denote a standard scalar Brownian motion on a complete probability space $(\Omega,\mathscr{F}, \mathbb{F}, \mathbb{P})$ with filtration $\mathbb{F} \coloneqq \left\lbrace \mathscr{F}_{t}\right\rbrace_{t \in [0,T]}$. The initial condition $\eta$ is an $\mathscr{F}_{0}$-measurable $H$-random variable. For each $t \in [0,T]$, let $\Xi_{T} \coloneqq \mathbb{L}^{2}(\Omega, \mathscr{F}_{T},\mathbb{P})$ denote the space of all $\mathscr{F}_{T}$ measurable, mean square integrable functions $f=(f_{1}, f_{2}, \ldots, f_{n})^{t}: \Omega \to \mathbb{R}^{n}$ with
\begin{equation*}
\|f\|^{2}_{\text{ms}}\coloneqq \textbf{E}(\|f\|^{2}).
\end{equation*}
Let $H^{2}([0,T], \mathbb{R}^{n})$ be well-endowed with the weighted maximum norm $\|\cdot\|_{\omega}$ as
\begin{equation} \label{maxnorm}
\| \xi\|^{2}_{\omega} \coloneqq \sup_{t\in [0,T]}\frac{\textbf{E}\| \xi (t)\|^{2}}{E_{2\alpha-1}(\omega t^{2\alpha-1})}, \\  \quad \text{for all} \quad \xi \in H^{2}([0,T], \mathbb{R}^{n}),  \omega>0.
\end{equation}

Let $\mathbb{R}^{n}$ be endowed with the standard Euclidean norm and $H^{2}([0,T], \mathbb{R}^{n})$ denote the space of all $\mathbb{F}_{T}$-measurable processes $\xi$ satisfying 

\begin{align*}
\| \xi\|^{2}_{H^{2}} \coloneqq \sup_{t\in [0,T]} \textbf{E}\|\xi(t)\|^{2}< \infty.
\end{align*}

Obviously, $(H^{2}([0,T], \mathbb{R}^{n}), \| \cdot \|_{H^{2}})$ is a Banach space. Since two norms $ \| \cdot \|_{H^{2}}\quad \text{and} \quad \| \cdot \|_{\omega}$ are equivalent,  $(H^{2}([0,T], \mathbb{R}^{n}),\| \cdot \|_{\omega})$ is also Banach space. Therefore, the set $H^{2}_{\eta}([0,T], \mathbb{R}^{n})$ is complete with respect to the norm $\|\cdot\|_{\omega}$ and $b, \sigma:[0,T]\times \mathbb{R}^{n} \to \mathbb{R}^{n}$ are measurable and bounded functions satisfying the following conditions:
\begin{assumption} \label{A1}
	The drift $b$ and diffusion $\sigma$ terms satisfy global Lipschitz continuity: there exists $L_{b},L_{\sigma}>0$ such that for all $x,y\in \mathbb{R}^{n}$ , $t\in [0,T]$,
	\begin{equation*}
	\|b(t,x)-b(t,y)\| \leq L_{b}\|x-y\|, \qquad  \|\sigma(t,x)-\sigma(t,y)\| \leq L_{\sigma}\|x-y\|.
	\end{equation*}
\end{assumption}
\begin{assumption}\label{A11}
	$b(\cdot,0)$ is $\mathbb{L}^{2}$ integrable i.e.
	\begin{equation*}
	\int_{0}^{T} \|b(r,0)\|^{2}\mathrm{d}r <\infty,
	\end{equation*}
	and $\sigma(\cdot,0)$ is essentially bounded i.e.
	\begin{equation*}
	ess\sup\limits_{r\in [0,T]}\|\sigma(r,0)\|< \infty.
	\end{equation*} 
\end{assumption}

\begin{defn}
	A stochastic process $ \left\lbrace X(t), t\in [0,T]\right\rbrace$ is called  a mild solution of \eqref{fstoc} if 
	\begin{itemize}
		\item $X(t)$ is adapted to $\left\lbrace \mathscr{F}_{t}\right\rbrace _{t \geq 0}$ with 
		$ \int_{0}^{t}  \|X(t)\|^{2}_{H^{2}}\mathrm{d}t< \infty$ almost everywhere;
		\item $X(t)\in H^{2}([0,T], \mathbb{R}^{n})$ has continuous path on $t\in [0,T]$ a.s. and satisfies Volterra integral equation of second kind on $t\in [0,T]$:
	\end{itemize}	
\end{defn}	
	\begin{align}  \label{integral equation}
	X(t)=\eta-\frac{A t^{\alpha-\beta}}{\Gamma(\alpha-\beta+1)}\eta&+\frac{A}{\Gamma(\alpha-\beta)}\int_{0}^{t}(t-r)^{\alpha-\beta-1}X(r) \mathrm{d}r\nonumber\\
	&+\frac{B}{\Gamma(\alpha)}\int_{0}^{t}(t-r)^{\alpha-1}X(r)\mathrm{d}r\nonumber\\
	&+\frac{1}{\Gamma(\alpha)}\int_{0}^{t}(t-r)^{\alpha-1}b(r,X(r))\mathrm{d}r\nonumber\\
	&+\frac{1}{\Gamma(\alpha)}\int_{0}^{t}(t-r)^{\alpha-1}\sigma(r,X(r))\mathrm{d}W(r).
	\end{align}
	
	To define above integral equation, we apply Riemann--Liouville integral operator $I^{\alpha}_{0+}$ to the both side of \eqref{fstoc}, we define
	\begin{equation*}
	I^{\alpha}_{0+}\prescript{C}{}D^{\alpha}_{0+} X(t)-A I^{\alpha}_{0+}\prescript{C}{}D^{\beta}_{0+} X(t)-B I^{\alpha}_{0+}X(t)=I^{\alpha}_{0+}b(t,X(t))+I^{\alpha}_{0+}\sigma(t, X(t))\frac{\mathrm{d}W(t)}{\mathrm{d}t}
	\end{equation*}
	Then we use the relationship between Riemann--Liouville integral and Caputo fractional differential operators \eqref{ID} for $\frac{1}{2}<\alpha\leq 1$, and $0<\beta\leq 1$, we get
	\begin{align*}
	X(t)&=\eta+\frac{A}{\Gamma(\alpha)}\int_{0}^{t}(t-r)^{\alpha-1}\prescript{C}{}D^{\beta}_{0+} X(r) \mathrm{d}r+\frac{B}{\Gamma(\alpha)}\int_{0}^{t}(t-r)^{\alpha-1}X(r)\mathrm{d}r\\
	&+\frac{1}{\Gamma(\alpha)}\int_{0}^{t}(t-r)^{\alpha-1}b(r,X(r))\mathrm{d}r+\frac{1}{\Gamma(\alpha)}\int_{0}^{t}(t-r)^{\alpha-1}\sigma(r,X(r))\mathrm{d}W(r),
	\end{align*}
	where 
	\begin{align} \label{integr}
	\frac{\lambda}{\Gamma(\alpha)}\int_{0}^{t}(t-r)^{\alpha-1}\prescript{C}{}D^{\beta}_{0+} X(r) \mathrm{d}r=\frac{\lambda}{\Gamma(\alpha-\beta+1)}\int_{0}^{t}(t-u)^{\alpha-\beta}X^{\prime}(u)\mathrm{d}u,
	\end{align}
Then, we apply integration by parts formula for \eqref{integr} to get \eqref{integral equation}.
Now we can represent our mild solution of \eqref{fstoc} involving non-permutable matrices.

\begin{lem}
	Let $A,B\in \mathbb{R}^{n\times n}$ with non-zero commutator, i.e., $\left[ A, B\right] \coloneqq AB- BA \neq 0$. For each initial value $\eta \in \Xi_{0}$, the mild solution $X(\cdot)\in \mathbb{R}^{n}$ of the Cauchy problem \eqref{fstoc} can be represented in terms of Mittag-Leffler type functions involving non-permutable matrices as below:
	\allowdisplaybreaks
	\begin{align}
	X(t)&=\left(I+ \sum_{k=0}^{\infty}\sum_{m=0}^{\infty}\mathcal{Q}_{k,m}^{A,B}B\frac{t^{k(\alpha-\beta)+m\alpha+\alpha}}{\Gamma(k(\alpha-\beta)+m\alpha+\alpha+1)}\right)\eta\nonumber\\
	&+\int\limits_{0}^{t}\sum_{k=0}^{\infty}\sum_{m=0}^{\infty}\mathcal{Q}_{k,m}^{A,B}\frac{(t-r)^{k(\alpha-\beta)+m\alpha+\alpha-1}}{\Gamma(k(\alpha-\beta)+m\alpha+\alpha)}X(r)\mathrm{d}r\nonumber\\
	&+\int\limits_{0}^{t}\sum_{k=0}^{\infty}\sum_{m=0}^{\infty}\mathcal{Q}_{k,m}^{A,B}\frac{(t-r)^{k(\alpha-\beta)+m\alpha+\alpha-1}}{\Gamma(k(\alpha-\beta)+m\alpha+\alpha)}b(r,X(r))\mathrm{d}r\nonumber\\
	&+\int\limits_{0}^{t}\sum_{k=0}^{\infty}\sum_{m=0}^{\infty}\mathcal{Q}_{k,m}^{A,B}\frac{(t-r)^{k(\alpha-\beta)+m\alpha+\alpha-1}}{\Gamma(k(\alpha-\beta)+m\alpha+\alpha)}\sigma(r,X(r))\mathrm{d}W(r)\nonumber\\
	&\coloneqq\left( I+t^{\alpha}\mathscr{E}_{\alpha-\beta,\alpha,\alpha+1}^{A,B}(t)B\right) \eta+\int\limits_{0}^{t}(t-r)^{\alpha-1}\mathscr{E}_{\alpha-\beta,\alpha,\alpha}^{A,B}(t-r)X(r)\mathrm{d}r\nonumber\\
	&+\int\limits_{0}^{t}(t-r)^{\alpha-1}\mathscr{E}_{\alpha-\beta,\alpha,\alpha}^{A,B}(t-r)b(r,X(r))\mathrm{d}r\nonumber\\
	&+\int\limits_{0}^{t}(t-r)^{\alpha-1}\mathscr{E}_{\alpha-\beta,\alpha,\alpha}^{A,B}(t-r)\sigma(r,X(r))\mathrm{d}W(r),\quad t>0.
	\end{align}
\end{lem}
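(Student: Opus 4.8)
The plan is to treat \eqref{integral equation} as a linear, matrix-valued Volterra equation of the second kind and to solve it by successive approximation, reading the Mittag--Leffler structure off the iterated kernels. First I would abbreviate \eqref{integral equation} as
\[
X = F + K_A\ast X + K_B\ast X, \qquad K_A(t)=\frac{A\,t^{\alpha-\beta-1}}{\Gamma(\alpha-\beta)}, \quad K_B(t)=\frac{B\,t^{\alpha-1}}{\Gamma(\alpha)},
\]
where $\ast$ denotes time convolution on $[0,t]$ and the forcing term
\[
F(t)=\Big(I-\frac{A\,t^{\alpha-\beta}}{\Gamma(\alpha-\beta+1)}\Big)\eta+\frac{1}{\Gamma(\alpha)}\int_0^t(t-r)^{\alpha-1}\big[b(r,X(r))\,\mathrm{d}r+\sigma(r,X(r))\,\mathrm{d}W(r)\big]
\]
collects the initial datum together with the two $\alpha$-order (Lebesgue and It\^o) convolutions. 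Iterating $X=F+(K_A+K_B)\ast X$ produces the Neumann series $X=\sum_{n\ge0}(K_A+K_B)^{\ast n}\ast F$, so the whole problem reduces to evaluating the iterated convolution powers of the combined kernel.

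Expanding $(K_A+K_B)^{\ast n}$ gives $2^n$ ordered products of $A$'s and $B$'s; a term that uses $A$ exactly $k$ times and $B$ exactly $m$ times produces, via the elementary identity $\frac{t^{a-1}}{\Gamma(a)}\ast\frac{t^{b-1}}{\Gamma(b)}=\frac{t^{a+b-1}}{\Gamma(a+b)}$ (the Beta-function computation already used in Lemma \ref{lemtech}), the scalar kernel $\frac{t^{k(\alpha-\beta)+m\alpha-1}}{\Gamma(k(\alpha-\beta)+m\alpha)}$ times the matrix word read left to right. The crux is to sum these words over all interleavings of $k$ copies of $A$ and $m$ of $B$ and to identify the result with $\mathcal{Q}_{k,m}^{A,B}$: splitting each word at its first occurrence of $B$ (a block $A^{k-l}$, then $B$, then a word in $l$ copies of $A$ and $m-1$ of $B$) reproduces exactly the recursion \eqref{important}, while the extreme words $A\cdots A$ and $B\cdots B$ give the initial data $\mathcal{Q}_{k,0}^{A,B}=A^k$ and $\mathcal{Q}_{0,m}^{A,B}=B^m$. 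Hence the resolvent kernel is $R(t)=\sum_{k+m\ge1}\mathcal{Q}_{k,m}^{A,B}\frac{t^{k(\alpha-\beta)+m\alpha-1}}{\Gamma(k(\alpha-\beta)+m\alpha)}$.

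It then remains to convolve $\delta+R$ (with $\delta$ the convolution unit) against $F$. For the $b$- and $\sigma$-driven pieces, $(\delta+R)\ast\frac{t^{\alpha-1}}{\Gamma(\alpha)}$ restores the missing $(k,m)=(0,0)$ term and reassembles precisely $t^{\alpha-1}\mathscr{E}_{\alpha-\beta,\alpha,\alpha}^{A,B}(t)$ from \eqref{ML4}, yielding the stated convolution kernels $(t-r)^{\alpha-1}\mathscr{E}_{\alpha-\beta,\alpha,\alpha}^{A,B}(t-r)$. For the initial datum, convolving $\delta+R$ against $\big(I-\frac{A\,t^{\alpha-\beta}}{\Gamma(\alpha-\beta+1)}\big)$ produces $\mathscr{E}_{\alpha-\beta,\alpha,1}^{A,B}(t)-\sum_{k,m\ge0}\mathcal{Q}_{k,m}^{A,B}A\,\frac{t^{(k+1)(\alpha-\beta)+m\alpha}}{\Gamma((k+1)(\alpha-\beta)+m\alpha+1)}$; here I would invoke the companion recursion $\mathcal{Q}_{k,m}^{A,B}-\mathcal{Q}_{k-1,m}^{A,B}A=\mathcal{Q}_{k,m-1}^{A,B}B$ (obtained by splitting words at their last letter) to telescope the two sums and collapse them to $I+t^{\alpha}\mathscr{E}_{\alpha-\beta,\alpha,\alpha+1}^{A,B}(t)B$, which is the claimed coefficient of $\eta$.

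I expect the main obstacle to be rigor rather than algebra: one must justify the term-by-term convolutions, the interchange of the double summation with the Lebesgue and It\^o integrals, and the convergence of every series. I would control all of this by the scalar bound $\|\mathcal{Q}_{k,m}^{A,B}\|\le\binom{k+m}{m}\|A\|^{k}\|B\|^{m}$, which dominates $t^{\alpha-1}\mathscr{E}_{\alpha-\beta,\alpha,\alpha}^{A,B}(t)$ by the permutable Mittag--Leffler function of Definition \ref{Def:bML} evaluated at $\|A\|,\|B\|$; the latter is entire, hence bounded on $[0,T]$, which legitimises the interchanges on compact time intervals. For the diffusion term the interchange of summation and $\mathrm{d}W$ requires the stochastic Fubini theorem together with the It\^o isometry, and this is exactly where the standing hypothesis $\alpha\in(\tfrac12,1)$ enters: it makes $(t-r)^{\alpha-1}$ square-integrable near $r=t$, so the stochastic convolutions are well defined in $L^{2}(\Omega)$. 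The same coefficients can be obtained more quickly by taking Laplace transforms and expanding the resolvent $(s^{\alpha}I-As^{\beta}-B)^{-1}=\sum_{k,m}\mathcal{Q}_{k,m}^{A,B}s^{-k(\alpha-\beta)-m\alpha-\alpha}$, though that route demands extra care with the transform of the It\^o integral. Finally, I would verify a posteriori that the assembled series satisfies \eqref{integral equation}, by substituting it back and using the same Beta-function and recursion identities, confirming that it is the mild solution and not merely a formal candidate.
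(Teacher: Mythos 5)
Your strategy is reasonable and, in fact, it is the only written-out argument available: the paper gives no proof of this lemma, saying only that ``these solutions can be derived with the help of variation of constants formula'' and deferring the coincidence of mild solution and integral equation to \cite{arzu}. Your Neumann-series construction of the resolvent (or its Laplace-transform shorthand, which you mention) is precisely how one would make that sentence rigorous, and the algebra you outline is correct: the sum of all ordered words in $k$ copies of $A$ and $m$ copies of $B$ obeys \eqref{important} (split each word at its first $B$) and the last-letter recursion $\mathcal{Q}_{k,m}^{A,B}=\mathcal{Q}_{k-1,m}^{A,B}A+\mathcal{Q}_{k,m-1}^{A,B}B$, the Beta-function convolution identity produces the stated Gamma denominators, and your telescoping does collapse the initial-datum terms to $\left(I+t^{\alpha}\mathscr{E}_{\alpha-\beta,\alpha,\alpha+1}^{A,B}(t)B\right)\eta$.

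The genuine problem is that what your argument proves is not the statement of the lemma, and you never remark on the difference. The lemma's right-hand side has four pieces: besides the initial-datum piece and the $b$- and $\sigma$-convolutions it retains the free-standing integral $\int_{0}^{t}(t-r)^{\alpha-1}\mathscr{E}_{\alpha-\beta,\alpha,\alpha}^{A,B}(t-r)X(r)\mathrm{d}r$. Your construction cannot produce that piece: by design the resolvent $\delta+R$ absorbs the \emph{entire} linear part $K_A\ast X+K_B\ast X$ of \eqref{integral equation}, so what comes out is the explicit representation
\begin{equation*}
X(t)=\left(I+t^{\alpha}\mathscr{E}_{\alpha-\beta,\alpha,\alpha+1}^{A,B}(t)B\right)\eta+\int_{0}^{t}(t-r)^{\alpha-1}\mathscr{E}_{\alpha-\beta,\alpha,\alpha}^{A,B}(t-r)\left[b(r,X(r))\mathrm{d}r+\sigma(r,X(r))\mathrm{d}W(r)\right],
\end{equation*}
in which $X$ appears only inside $b$ and $\sigma$. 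No partial-resolvent bookkeeping can repair this: requiring the kernel acting on $b$ and $\sigma$ to be $t^{\alpha-1}\mathscr{E}_{\alpha-\beta,\alpha,\alpha}^{A,B}$ forces $R$ to be the full resolvent, which then leaves no surviving $X$-term. Indeed, in your own Laplace picture the lemma's four-term identity inverts to $(s^{\alpha}I-As^{\beta}-B-I)\widehat{X}=(s^{\alpha-1}I-As^{\beta-1})\eta+\widehat{f}$, i.e.\ it is the variation-of-constants formula for \eqref{fstoc} with $B$ replaced by $B+I$, not for \eqref{fstoc}. So either you must flag that the stated representation (and the fixed-point operator \eqref{T.oper} built from it) contains a spurious $X(r)$ integral and prove the corrected three-term formula, or you must explain where that fourth term comes from --- which your plan cannot do; in particular your final ``a posteriori verification'' step would fail if you substituted the lemma's expression, rather than yours, into \eqref{integral equation}. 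As written, the proposal silently proves a different identity and therefore does not establish the lemma as stated.
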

	
\begin{lem}
As a special case, for each initial value $\eta \in \Xi_{0}$  the system \eqref{fstoc} has a unique mild solution in terms of Mittag-Leffler type functions \eqref{bML4} with permutable matrices on $[0,T]$ as below:
\begin{align}\label{36}
X(t)=\eta&+\eta t^{\alpha}BE_{\alpha,\alpha-\beta,\alpha+1}(B t^{\alpha}, At^{\alpha-\beta})\nonumber\\
&+\int_{0}^{t}(t-r)^{\alpha-1}E_{\alpha,\alpha-\beta,\alpha}(B (t-r)^{\alpha}, A (t-r)^{\alpha-\beta})X(r)\mathrm{d}r\nonumber\\
&+\int_{0}^{t}(t-r)^{\alpha-1}E_{\alpha,\alpha-\beta,\alpha}(B (t-r)^{\alpha}, A (t-r)^{\alpha-\beta})b(r,X(r))\mathrm{d}r\nonumber\\
&+\int_{0}^{t}(t-r)^{\alpha-1}E_{\alpha,\alpha-\beta,\alpha}(B (t-r)^{\alpha}, A (t-r)^{\alpha-\beta})\sigma(r,X(r))\mathrm{d}W(r).
\end{align}	
\end{lem}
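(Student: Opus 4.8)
The plan is to treat \eqref{36} as the permutable specialisation of the representation obtained in the preceding Lemma, rather than re-deriving it from scratch. Since that representation is valid for arbitrary $A,B\in\mathbb{R}^{n\times n}$ through the coefficients $\mathcal{Q}_{k,m}^{A,B}$ defined in \eqref{important}, the first step is to impose $AB=BA$ and invoke the closed form $\mathcal{Q}_{k,m}^{A,B}=\binom{k+m}{m}A^{k}B^{m}$ recorded after Definition 2.3. Substituting this into each of the four double series appearing in the preceding Lemma turns every kernel $\sum_{k,m}\mathcal{Q}_{k,m}^{A,B}\,(t-r)^{k(\alpha-\beta)+m\alpha+\alpha-1}/\Gamma(k(\alpha-\beta)+m\alpha+\alpha)$ into $\sum_{k,m}\binom{k+m}{m}A^{k}B^{m}(t-r)^{\ldots}/\Gamma(\ldots)$, and likewise for the $\eta$-coefficient.

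The core of the argument is then to recognise these specialised series as the bivariate Mittag-Leffler functions of Definition \ref{Def:bML}. Comparing the generic kernel with \eqref{bML4}, I would match the two lower indices of the non-permutable function $\mathscr{E}_{\alpha-\beta,\alpha,\cdot}^{A,B}$ with the pair $(\alpha,\alpha-\beta)$ carried by $E_{\alpha,\alpha-\beta,\cdot}$, identify the arguments $B(t-r)^{\alpha}$ and $A(t-r)^{\alpha-\beta}$ so that the matrix powers $A^{k}B^{m}$ and the exponent $k(\alpha-\beta)+m\alpha$ are produced correctly, and check that the $\Gamma$-normalisation lines up index by index. Doing this with $\delta=\alpha$ yields the three kernel terms (the $X$-, $b$- and $\sigma$-integrals), while the $\eta$-term requires $\delta=\alpha+1$ together with the extra left factor $B$, giving $I+t^{\alpha}BE_{\alpha,\alpha-\beta,\alpha+1}(Bt^{\alpha},At^{\alpha-\beta})$. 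This index bookkeeping is where I expect the only real friction: one must be careful that the roles played by $\alpha$ and $\alpha-\beta$ in the power of $t$ versus in the $\Gamma$-factor are assigned consistently with \eqref{bML4} (the two coincide only on the diagonal $k=m$, so a mismatched convention would introduce a spurious $(k-m)\beta$ shift), and that the shift $m\mapsto m+1$ coming from the trailing $B$ in the $\eta$-coefficient is handled correctly. Because each bivariate Mittag-Leffler series is entire, the term-by-term rearrangement and the interchange of summation with the Lebesgue and It\^o integrals are all justified, so no additional convergence analysis is needed here.

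Finally, for the uniqueness assertion I would fall back on the contraction-mapping machinery set up in this section. Defining the operator $\mathcal{T}$ as the right-hand side of \eqref{integral equation} on the Banach space $(H^{2}([0,T],\mathbb{R}^{n}),\|\cdot\|_{\omega})$, I would estimate $\|\mathcal{T}\xi-\mathcal{T}\zeta\|_{\omega}^{2}$ by splitting into the drift and diffusion integrals, applying the Lipschitz bounds of Assumption \ref{A1}, the Cauchy--Schwarz inequality \eqref{cauchy} to the drift term and the It\^o isometry to the diffusion term, together with Jensen's inequality \eqref{ineqq}. The weighted norm \eqref{maxnorm} is tailored precisely so that Lemma \ref{lemtech} absorbs the convolution against $(t-r)^{2\alpha-2}E_{2\alpha-1}(\omega r^{2\alpha-1})$ into $E_{2\alpha-1}(\omega t^{2\alpha-1})$, producing a contraction constant that can be made strictly less than $1$ by choosing $\omega$ large. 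Banach's fixed point theorem then yields the unique mild solution, which by the specialisation above must coincide with \eqref{36}.
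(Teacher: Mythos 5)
Your proposal is correct in substance, but it necessarily differs from the paper's treatment, because the paper does not actually prove this lemma: immediately after stating it, the authors remark only that ``these solutions can be derived with the help of variation of constants formula,'' that the coincidence between the mild solution and the integral equation \eqref{integral equation} can be proved as in \cite{arzu}, and that the proofs are therefore omitted. So the paper's intended route is a direct variation-of-constants derivation, whereas yours is an algebraic specialisation of the preceding (non-permutable) lemma: impose $AB=BA$, replace $\mathcal{Q}_{k,m}^{A,B}$ by $\binom{k+m}{m}A^{k}B^{m}$, and recognise the resulting double series as the bivariate function of Definition \ref{Def:bML} with $\delta=\alpha$ for the three kernels and $\delta=\alpha+1$ for the $\eta$-term (where commutativity lets you move the trailing $B$ past the series, so no index shift $m\mapsto m+1$ is actually needed), followed by the contraction argument on $(H^{2}([0,T],\mathbb{R}^{n}),\|\cdot\|_{\omega})$ for uniqueness --- which is exactly the machinery of Lemma \ref{lemtech} and Theorem \ref{thm2}, front-loaded from Section \ref{stochastic}. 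Your route buys economy (it makes the phrase ``as a special case'' literal and avoids redoing variation of constants), and your bookkeeping worry is well founded: as printed, \eqref{bML4} carries $\Gamma(k\alpha+m\beta+\delta)$ against the exponent $t^{m\alpha+k\beta+\delta-1}$, which is inconsistent off the diagonal $k=m$; your identification tacitly uses the corrected, consistent reading, which is the one that matches the non-permutable series. Two caveats should be made explicit for the argument to be self-contained. First, the preceding lemma is stated under the hypothesis $[A,B]\neq 0$, so formally you invoke it outside its hypotheses; the fix is to note that neither the recursion \eqref{important} nor the (omitted) derivation of that representation uses non-commutativity, so it holds for arbitrary $A,B\in\mathbb{R}^{n\times n}$. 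Second, your final step --- that the unique fixed point of the operator built from \eqref{integral equation} ``must coincide with \eqref{36}'' --- silently uses the equivalence between the mild solution and the Mittag-Leffler representation, which is precisely the coincidence the paper delegates to \cite{arzu}; that dependence should be acknowledged rather than absorbed into the specialisation. Neither caveat breaks the proof, but both belong in it.
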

These solutions can be derived with the help of variation of constants formula. Then the coincidence between the notion of mild solution and integral equation of \eqref{fstoc} with permutable and non-permutable matrices can be proved in a similar way depicted in \cite{arzu}. Therefore, we omit those proofs here.
\section{Existence \& uniqueness results and continuity dependence on initial conditions }\label{stochastic}
In Section \ref{asymptotic}, we will look at the behavior of solutions to multi-order systems as the independent variable goes to infinity. For this purpose, it is important to have an existence and uniqueness result. Therefore, our first aim in this research article is to show the global existence and uniqueness of solution of \eqref{fstoc}. Moreover, we also prove  the continuity dependence of solutions on the initial values.
%\begin{thm}[A variation of constants formula for Caputo SLDEs] \label{theorem1}
%	The unique mild solution of (\ref{fstoc}) with initial values $\eta$ and $\tilde{\eta}$ on $[0,T]$ is given by:
%	\begin{align}\label{constantfor}
	%X(t)=\eta&+\eta t^{\alpha}BE_{\alpha,\alpha-\beta,\alpha+1}(Bt^{\alpha}, A t^{\alpha-\beta})\\
    %&+\int_{0}^{t}(t-r)^{\alpha-1}E_{\alpha,\alpha-\beta,\alpha}(B(t-r)^{\alpha}, A (t-r)^{\alpha-\beta})b(r,X(r))\mathrm{d}r\nonumber\\
	%&+\int_{0}^{t}(t-r)^{\alpha-1}E_{\alpha,\alpha-\beta,\alpha}(\mu (t-r)^{\alpha}, \lambda (t-r)^{\alpha-\beta})\sigma(r,X(r))\mathrm{d}W(r).
	%\end{align}
%\end{thm}
%\begin{proof}
%	The proof of above theorem is similarly depicted as in \cite{arzu}.
%\end{proof}
\begin{thm}[Global existence and uniqueness and continuity dependence on the initial values of solutions of Caputo SMTDE]\label{thm2}
	Suppose that Assumptions \ref{A1} and \ref{A11} hold. Then
	
	(i) for any $\eta\in \Xi_{0}$, the Cauchy problem \eqref{fstoc} with initial condition $X(0)=\eta$ has a unique global solution on the whole interval $[0, T]$ denoted by $\varphi(\cdot, \eta)$;
	
	(ii) on any bounded time interval $[0,T]$ with $T>0$, the solution $\varphi(\cdot, \eta)$ depends continuously on $\eta$, i.e.
	\begin{equation*}
	\lim\limits_{\eta \to \gamma} \sup\limits_{t\in [0,T]}\|\varphi(t, \eta)-\varphi(t, \gamma)\|^{2}_{\text{ms}}=0.
	\end{equation*}
\end{thm}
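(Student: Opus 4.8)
The plan is to realise the mild solution of \eqref{fstoc} as the unique fixed point of the Picard-type integral operator associated with \eqref{integral equation}, and to deduce both assertions from the Banach contraction principle applied in the weighted space $(H^{2}([0,T],\mathbb{R}^{n}),\|\cdot\|_{\omega})$. Concretely, I would define $\mathcal{T}:H^{2}([0,T],\mathbb{R}^{n})\to H^{2}([0,T],\mathbb{R}^{n})$ by letting $(\mathcal{T}X)(t)$ be the right-hand side of \eqref{integral equation}, so that a process is a mild solution precisely when it is a fixed point of $\mathcal{T}$. The role of the exponent restriction $\alpha\in(\tfrac12,1)$ is that it makes the kernel $(t-r)^{\alpha-1}$ square-integrable on $[0,t]$ (since $2\alpha-2>-1$), which is exactly what is needed to control the stochastic integral through the It\^o isometry and the deterministic integrals through the Cauchy--Schwarz inequality \eqref{cauchy}.

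First I would check that $\mathcal{T}$ is well defined, i.e. that $\mathcal{T}X\in H^{2}([0,T],\mathbb{R}^{n})$ whenever $X\in H^{2}([0,T],\mathbb{R}^{n})$. Splitting $\|(\mathcal{T}X)(t)\|^{2}$ into its six contributions with Jensen's inequality \eqref{ineqq}, the two free terms are controlled by $\|\eta\|_{\mathrm{ms}}$ and the bounded factor $t^{\alpha-\beta}$; for the integral terms I would write $b(r,X(r))=\big(b(r,X(r))-b(r,0)\big)+b(r,0)$ and likewise for $\sigma$, so that Assumption \ref{A1} (Lipschitz continuity) together with Assumption \ref{A11} (the $\mathbb{L}^{2}$-integrability of $b(\cdot,0)$ and essential boundedness of $\sigma(\cdot,0)$) bounds each integrand by an affine function of $\mathbf{E}\|X(r)\|^{2}$. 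Adaptedness and mean-square continuity of the paths then follow from the corresponding properties of the deterministic and stochastic integrals, again using $\alpha>\tfrac12$.

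The decisive step is the contraction estimate. For $X,Y\in H^{2}([0,T],\mathbb{R}^{n})$ sharing the initial datum $\eta$, the difference $(\mathcal{T}X)(t)-(\mathcal{T}Y)(t)$ has no free terms, and after \eqref{ineqq} I would estimate each of its four integral contributions. For the stochastic term the It\^o isometry produces directly $\int_{0}^{t}(t-r)^{2\alpha-2}\mathbf{E}\|\sigma(r,X(r))-\sigma(r,Y(r))\|^{2}\,\mathrm{d}r$, which Lipschitz continuity turns into $L_{\sigma}^{2}\int_{0}^{t}(t-r)^{2\alpha-2}\mathbf{E}\|X(r)-Y(r)\|^{2}\,\mathrm{d}r$; for the $B$- and $b$-terms, applying Cauchy--Schwarz \eqref{cauchy} against the constant function $1$ yields the same kernel $(t-r)^{2\alpha-2}$ up to the bounded factor $t\le T$. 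Bounding $\mathbf{E}\|X(r)-Y(r)\|^{2}\le\|X-Y\|_{\omega}^{2}\,E_{2\alpha-1}(\omega r^{2\alpha-1})$ by the definition \eqref{maxnorm} and then invoking Lemma \ref{lemtech} gives, for each term, a bound of the form $\tfrac{C\,\Gamma(2\alpha-1)}{\omega}\,E_{2\alpha-1}(\omega t^{2\alpha-1})\,\|X-Y\|_{\omega}^{2}$. Dividing by $E_{2\alpha-1}(\omega t^{2\alpha-1})$ and taking the supremum over $[0,T]$ produces $\|\mathcal{T}X-\mathcal{T}Y\|_{\omega}^{2}\le \tfrac{C(\alpha,\beta,L_{b},L_{\sigma},A,B,T)}{\omega}\|X-Y\|_{\omega}^{2}$, so a sufficiently large $\omega$ makes $\mathcal{T}$ a contraction; the Banach fixed point theorem then supplies the unique mild solution $\varphi(\cdot,\eta)$, and since $T>0$ is arbitrary this proves (i).

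I expect the main obstacle to be the first integral term, whose kernel $(t-r)^{\alpha-\beta-1}$ arising from the Caputo derivative of order $\beta$ is more singular than $(t-r)^{\alpha-1}$ and is therefore not in the form required to apply Lemma \ref{lemtech} verbatim. I would resolve this through the Cauchy--Schwarz splitting $(t-r)^{\alpha-\beta-1}=(t-r)^{-\beta}(t-r)^{\alpha-1}$, which peels off an integrable factor and restores precisely the kernel $(t-r)^{2\alpha-2}$ to which Lemma \ref{lemtech} applies; equivalently, and more robustly, one may work with the bounded Mittag-Leffler kernel $(t-r)^{\alpha-1}\mathscr{E}^{A,B}_{\alpha-\beta,\alpha,\alpha}(t-r)$, whose leading singularity is only $(t-r)^{\alpha-1}$. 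Finally, for part (ii) I would subtract the integral equations for $\varphi(\cdot,\eta)$ and $\varphi(\cdot,\gamma)$; the free terms now contribute $C\|\eta-\gamma\|_{\mathrm{ms}}^{2}$, while the integral terms are estimated exactly as above, giving $\|\varphi(\cdot,\eta)-\varphi(\cdot,\gamma)\|_{\omega}^{2}\le C\|\eta-\gamma\|_{\mathrm{ms}}^{2}+\tfrac{C}{\omega}\|\varphi(\cdot,\eta)-\varphi(\cdot,\gamma)\|_{\omega}^{2}$. Absorbing the last term for large $\omega$ and using the equivalence of $\|\cdot\|_{\omega}$ and $\|\cdot\|_{H^{2}}$ yields $\sup_{t\in[0,T]}\|\varphi(t,\eta)-\varphi(t,\gamma)\|_{\mathrm{ms}}^{2}\le C'\|\eta-\gamma\|_{\mathrm{ms}}^{2}$, which tends to $0$ as $\eta\to\gamma$.
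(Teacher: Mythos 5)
Your overall strategy --- a Picard-type operator, the weighted norm \eqref{maxnorm}, Lemma \ref{lemtech}, the Banach fixed point theorem for (i), and the absorption argument for (ii) --- is exactly the paper's strategy. The difference lies in \emph{which} integral equation you iterate, and there your primary choice contains a genuine gap. You define $\mathcal{T}$ from the raw Volterra equation \eqref{integral equation}, so you must control the kernel $(t-r)^{\alpha-\beta-1}$ coming from the $A$-term, and your proposed fix is the Cauchy--Schwarz splitting $(t-r)^{\alpha-\beta-1}=(t-r)^{-\beta}(t-r)^{\alpha-1}$. After squaring, this produces the factor
\begin{equation*}
\int_{0}^{t}(t-r)^{-2\beta}\,\mathrm{d}r,
\end{equation*}
which is finite only when $\beta<\tfrac{1}{2}$, whereas the theorem is asserted for all $\beta\in(0,1)$. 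Worse, when $\beta\geq\alpha$ the kernel $(t-r)^{\alpha-\beta-1}$ is not even locally integrable and the free term $\frac{At^{\alpha-\beta}}{\Gamma(\alpha-\beta+1)}\eta$ is unbounded as $t\to 0^{+}$, so no splitting of this kind can rescue the estimate: the operator built directly from \eqref{integral equation} does not map $H^{2}$ into itself on the full stated parameter range. (Your remark that the free terms are controlled by ``the bounded factor $t^{\alpha-\beta}$'' silently assumes $\beta\leq\alpha$.)

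This is precisely why the paper does not iterate \eqref{integral equation}: it iterates the Mittag-Leffler representation, defining $\mathcal{T}_{\eta}$ by \eqref{T.oper}, in which the linear $A$- and $B$-terms have been resummed into the kernel $(t-r)^{\alpha-1}\mathscr{E}^{A,B}_{\alpha-\beta,\alpha,\alpha}(t-r)$. Its only singularity is $(t-r)^{\alpha-1}$, square-integrable since $\alpha>\tfrac{1}{2}$, the Mittag-Leffler factor being absorbed into the constant $\mathscr{M}=\sup_{t\in[0,T]}\|\mathscr{E}^{A,B}_{\alpha-\beta,\alpha,\alpha}(t)\|$; every term is then handled exactly as you describe (Cauchy--Schwarz against the constant $1$, It\^{o} isometry, Lemma \ref{lemtech}), giving the contraction constant $\zeta=\frac{3\Gamma(2\alpha-1)}{\omega}\mathscr{M}^{2}\bigl(1+L_{b}^{2}T+L_{\sigma}^{2}\bigr)$ and, for (ii), the absorbed estimate $\bigl(1-\zeta\bigr)\|\varphi(\cdot,\eta)-\varphi(\cdot,\gamma)\|_{\omega}^{2}\leq 4\mathcal{C}\|\eta-\gamma\|_{\text{ms}}^{2}$. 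You do mention this kernel as an ``equivalent, more robust'' alternative, but it is not equivalent: of your two options it is the only one that covers $\beta\geq\tfrac{1}{2}$, and it is the one the paper takes. If you restate your proof with \eqref{T.oper} as the definition of the operator from the start (and with the corresponding free term $\bigl(I+t^{\alpha}\mathscr{E}^{A,B}_{\alpha-\beta,\alpha,\alpha+1}(t)B\bigr)\eta$ in part (ii)), the rest of your argument goes through as written and coincides with the paper's proof.
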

For any $\eta\in \Xi_{0}$, we define an operator $\mathcal{T}_{\eta}: H^{2}_{\eta}([0,T],\mathbb{R}^{n}) \to H^{2}_{\eta}([0,T],\mathbb{R}^{n})$ by
\begin{align} \label{T.oper}
\mathcal{T}_{\eta} Y= \left( I+t^{\alpha}\mathscr{E}_{\alpha-\beta,\alpha,\alpha+1}^{A,B}(t)B\right) \eta&+\int\limits_{0}^{t}(t-r)^{\alpha-1}\mathscr{E}_{\alpha-\beta,\alpha,\alpha}^{A,B}(t-r)Y(r)\mathrm{d}r\nonumber\\
&+\int\limits_{0}^{t}(t-r)^{\alpha-1}\mathscr{E}_{\alpha-\beta,\alpha,\alpha}^{A,B}(t-r)b(r,Y(r))\mathrm{d}r\nonumber\\
&+\int\limits_{0}^{t}(t-r)^{\alpha-1}\mathscr{E}_{\alpha-\beta,\alpha,\alpha}^{A,B}(t-r)\sigma(r,Y(r))\mathrm{d}W(r),\quad t>0.
\end{align}
The following lemma is devoted to showing that $\mathcal{T}_{\eta}$ is well-defined. 
\begin{lem}\label{welldef}
	For $\eta \in \Xi_{0}$, the operator $\mathcal{T}_{\eta}$ is well-defined.
\end{lem}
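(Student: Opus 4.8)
The plan is to show that for every $Y\in H^{2}_{\eta}([0,T],\mathbb{R}^{n})$ the image $\mathcal{T}_{\eta}Y$ again belongs to $H^{2}_{\eta}([0,T],\mathbb{R}^{n})$; that is, $\mathcal{T}_{\eta}Y$ is $\mathbb{F}$-adapted, has (a.s.) continuous paths, and satisfies $\sup_{t\in[0,T]}\textbf{E}\|(\mathcal{T}_{\eta}Y)(t)\|^{2}<\infty$. Adaptedness is immediate since $\eta$ is $\mathscr{F}_{0}$-measurable, the integrands $Y$, $b(\cdot,Y)$ and $\sigma(\cdot,Y)$ are adapted, and the deterministic and It\^o convolutions preserve adaptedness. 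The substance of the argument is therefore the finiteness of the mean-square norm, which I would obtain by writing $\mathcal{T}_{\eta}Y=I_{1}+I_{2}+I_{3}+I_{4}$ for the four summands in \eqref{T.oper} and applying the Jensen-type inequality \eqref{ineqq} with $n=4$ to reduce the estimate to four separate bounds on $\textbf{E}\|I_{j}\|^{2}$.

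The first preliminary step is a uniform bound on the matrix-valued Mittag--Leffler type kernels over $[0,T]$. Using the series \eqref{ML4} together with the Pascal-type recursion \eqref{important}, an induction gives $\|\mathcal{Q}_{k,m}^{A,B}\|\le\binom{k+m}{m}\|A\|^{k}\|B\|^{m}$, so the double series converges locally uniformly and yields finite constants $M_{0}:=\sup_{s\in[0,T]}\|s^{\alpha}\mathscr{E}_{\alpha-\beta,\alpha,\alpha+1}^{A,B}(s)B\|$ and $M_{1}:=\sup_{s\in[0,T]}\|\mathscr{E}_{\alpha-\beta,\alpha,\alpha}^{A,B}(s)\|$. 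With these in hand, for $I_{1}$ I would simply estimate $\textbf{E}\|I_{1}\|^{2}\le(1+M_{0})^{2}\,\textbf{E}\|\eta\|^{2}$, which is finite because $\eta\in\Xi_{0}=\mathbb{L}^{2}(\Omega,\mathscr{F}_{0},\mathbb{P})$. For the deterministic convolution $I_{2}$ I would pull out $M_{1}$, apply the Cauchy--Schwarz inequality \eqref{cauchy} with factors $(t-r)^{\alpha-1}$ and $\|Y(r)\|$, and use $\int_{0}^{t}(t-r)^{2\alpha-2}\,\mathrm{d}r=t^{2\alpha-1}/(2\alpha-1)$ to arrive at $\textbf{E}\|I_{2}\|^{2}\le M_{1}^{2}\,\frac{T^{2\alpha-1}}{2\alpha-1}\int_{0}^{t}\textbf{E}\|Y(r)\|^{2}\,\mathrm{d}r\le M_{1}^{2}\,\frac{T^{2\alpha}}{2\alpha-1}\,\|Y\|_{H^{2}}^{2}$. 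Term $I_{3}$ is handled identically after writing $\|b(r,Y(r))\|\le L_{b}\|Y(r)\|+\|b(r,0)\|$ from Assumption \ref{A1} and invoking the $\mathbb{L}^{2}$-integrability of $b(\cdot,0)$ from Assumption \ref{A11}.

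For the stochastic term $I_{4}$ I would use the It\^o isometry, giving $\textbf{E}\|I_{4}\|^{2}=\int_{0}^{t}(t-r)^{2\alpha-2}\,\|\mathscr{E}_{\alpha-\beta,\alpha,\alpha}^{A,B}(t-r)\|^{2}\,\textbf{E}\|\sigma(r,Y(r))\|^{2}\,\mathrm{d}r\le M_{1}^{2}\int_{0}^{t}(t-r)^{2\alpha-2}\,\textbf{E}\|\sigma(r,Y(r))\|^{2}\,\mathrm{d}r$, and then bound $\|\sigma(r,Y(r))\|\le L_{\sigma}\|Y(r)\|+\|\sigma(r,0)\|$ via Assumption \ref{A1} together with the essential boundedness of $\sigma(\cdot,0)$ from Assumption \ref{A11}. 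Taking the supremum over $t\in[0,T]$ in all four bounds then establishes $\mathcal{T}_{\eta}Y\in H^{2}([0,T],\mathbb{R}^{n})$, while sample-path continuity follows from the local integrability of the convolution kernels and the standard continuity of stochastic convolutions with square-integrable kernel. I expect the only genuinely delicate point to be the integrability of the singular kernel $(t-r)^{2\alpha-2}$ at $r=t$: this is exactly where the restriction $\alpha\in(\tfrac12,1)$ is essential, since $2\alpha-2>-1$ is precisely the condition guaranteeing both the finiteness of $\int_{0}^{t}(t-r)^{2\alpha-2}\,\mathrm{d}r$ in $I_{2},I_{3}$ and the finiteness of the It\^o isometry in $I_{4}$. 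The non-permutability of $A,B$ enters only through the bookkeeping of $\mathcal{Q}_{k,m}^{A,B}$ and does not affect the estimates once $M_{0},M_{1}$ have been secured.
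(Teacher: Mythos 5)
Your proposal is correct and follows essentially the same route as the paper: the Jensen-type inequality \eqref{ineqq} with $n=4$ to split $\mathcal{T}_{\eta}Y$ into four terms, Cauchy--Schwarz for the two deterministic convolutions, It\^{o}'s isometry for the stochastic one, and the Lipschitz bounds of Assumption \ref{A1} combined with Assumption \ref{A11} to control $b(\cdot,0)$ and $\sigma(\cdot,0)$. The only additions beyond the paper's argument are welcome refinements rather than a different method: you justify the finiteness of $\sup_{s\in[0,T]}\|\mathscr{E}_{\alpha-\beta,\alpha,\alpha}^{A,B}(s)\|$ via the bound $\|\mathcal{Q}_{k,m}^{A,B}\|\le\binom{k+m}{m}\|A\|^{k}\|B\|^{m}$ (which the paper takes for granted when defining $\mathscr{M}$), and you explicitly address adaptedness and path continuity, which the paper leaves implicit.
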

\begin{proof}
	Let $Y\in H^{2}_{\eta}([0,T], \mathbb{R}^{n})$ be arbitrary. From the definition of $\mathcal{T}_{\eta}Y$ as in \eqref{T.oper} and the Jensen's inequality \eqref{ineqq} for $n=4$, we have for all $t \in [0,T]$:
	\allowdisplaybreaks
	\begin{align}\label{Tnorm}
	\|(\mathcal{T}_{\eta}Y)(t)\|^{2}_{\text{ms}} &\leq 4\|\eta\|^{2}_{\text{ms}}\| I+t^{\alpha}\mathscr{E}_{\alpha-\beta,\alpha,\alpha+1}^{A,B}(t)B\|^{2}\nonumber\\ &+4\textbf{E}\norm{\int\limits_{0}^{t}(t-r)^{\alpha-1}\mathscr{E}_{\alpha-\beta,\alpha,\alpha}^{A,B}(t-r)Y(r)\mathrm{d}r}^{2}\nonumber\\
	&+4\textbf{E}\norm{\int\limits_{0}^{t}(t-r)^{\alpha-1}\mathscr{E}_{\alpha-\beta,\alpha,\alpha}^{A,B}(t-r)b(r,Y(r))\mathrm{d}r}^{2}\nonumber\\
	&+4\textbf{E}\norm{\int\limits_{0}^{t}(t-r)^{\alpha-1}\mathscr{E}_{\alpha-\beta,\alpha,\alpha}^{A,B}(t-r)\sigma(r,Y(r))\mathrm{d}W(r)}^{2}.
	\end{align}
	Considering $\mathscr{M}\coloneqq \sup\limits_{t\in [0,T]}\|\mathscr{E}_{\alpha-\beta,\alpha,\alpha}^{A,B}(t)\|$ and using Cauchy-Schwarz inequality, we obtain the following results:
	\begin{align}\label{1}
    \textbf{E}\norm{\int\limits_{0}^{t}(t-r)^{\alpha-1}\mathscr{E}_{\alpha-\beta,\alpha,\alpha}^{A,B}(t-r)Y(r)\mathrm{d}r}^{2}&\leq t\int_{0}^{t}(t-r)^{2\alpha-2}\textbf{E}\|Y(r)\|^{2} \mathrm{d}r\nonumber\\
	&\leq \mathscr{M}^{2}T \int_{0}^{t}(t-r)^{2\alpha-2}\sup_{r\in [0,T]}\textbf{E}\|Y(r)\|^{2}\mathrm{d}r\nonumber\\
	&\leq  \mathscr{M}^{2}T \int_{0}^{t}(t-r)^{2\alpha-2}\|Y\|^{2}_{H^{2}}\mathrm{d}r=\mathscr{M}^{2}\frac{T^{2\alpha}}{2\alpha-1}\|Y\|^{2}_{H^{2}},
\end{align}
and
	\begin{align}\label{bb}
	\textbf{E}\norm{\int_{0}^{t}(t-r)^{\alpha-1}\mathscr{E}_{\alpha-\beta,\alpha,\alpha}^{A,B}(t-r)b(r,Y(r))\mathrm{d}r}^{2} \nonumber&\leq\mathscr{M}^{2}\int_{0}^{t}(t-r)^{2\alpha-2} \mathrm{d}r\textbf{E}\int_{0}^{t}\|b(r,Y(r))\|^{2}\mathrm{d}r\nonumber\\
	&=\mathscr{M}^{2}\frac{T^{2\alpha-1}}{2\alpha-1}\textbf{E}\int_{0}^{t}\|b(r,Y(r))\|^{2}\mathrm{d}r.
	\end{align}
	From Assumption \ref{A1}, we derive 
	\begin{align*}
	\|b(r,Y(r))\|^{2}&\leq 2\|b(r,Y(r))-b(r,0)\|^{2}+2\|b(r,0)\|^{2} \\
	&\leq 2L^{2}_{b}\|Y(r)\|^{2}+2\|b(r,0)\|^{2}.
	\end{align*}
	Therefore,
	\begin{align*}
	\textbf{E}\int_{0}^{t}\|b(r,Y(r))\|^{2}\mathrm{d}r&\leq 2L^{2}_{b}\textbf{E}\int_{0}^{t}\|Y(r)\|^{2}\mathrm{d}r+2\int_{0}^{t}\|b(r,0)\|^{2}\mathrm{d}r\\
	&\leq 2L_{b}^{2}T\sup\limits_{r \in [0,T]}\textbf{E}\|Y(r)\|^{2}+2\int_{0}^{T}\|b(r,0)\|^{2}\mathrm{d}r
	\end{align*}
	which together with \eqref{bb} implies that
	\begin{align}\label{bbb}
	\textbf{E}\norm{\int_{0}^{t}(t-r)^{\alpha-1}\mathscr{E}_{\alpha-\beta,\alpha,\alpha}^{A,B}(t-r)b(r,Y(r))\mathrm{d}r}^{2}&\leq 2\mathscr{M}^{2}\frac{L_{b}^{2}T^{2\alpha}}{2\alpha-1}\|Y\|^{2}_{H^{2}}\nonumber\\
	&+2\mathscr{M}^{2}\frac{T^{2\alpha-1}}{2\alpha-1}\int_{0}^{T}\|b(r,0)\|^{2}\mathrm{d}r.
	\end{align}
	Now using the It\^{o}'s isometry, we attain
	\begin{align*}
	\textbf{E}\norm{\int_{0}^{t}(t-r)^{\alpha-1}\mathscr{E}_{\alpha-\beta,\alpha,\alpha}^{A,B}(t-r)\sigma(r,Y(r))\mathrm{d}W(r)}^{2}
	&=\sum_{i=1}^{d}\textbf{E}\left( \int_{0}^{t}(t-r)^{\alpha-1}\mathscr{E}_{\alpha-\beta,\alpha,\alpha}^{A,B}(t-r)\sigma_{i}(r,Y(r))\mathrm{d}W_{r}\right)^{2} \\
	&=\mathscr{M}^{2}\sum_{i=1}^{d}\textbf{E}\left( \int_{0}^{t}(t-r)^{2\alpha-2}|\sigma_{i}(r,Y(r))|^{2}\mathrm{d}r\right) \\
	&=\mathscr{M}^{2}\textbf{E}\int_{0}^{t}(t-r)^{2\alpha-2}\|\sigma(r,Y(r))\|^{2}\mathrm{d}r\\
	&\leq \mathscr{M}^{2}T^{2\alpha-2}\textbf{E}\int_{0}^{t}\|\sigma(r,Y(r))\|^{2}\mathrm{d}r.
	\end{align*}
	From Assumption \ref{A1}, we also have,
	\begin{align*}
	\|\sigma(r,Y(r))\|^{2}&\leq 2\|\sigma(r,Y(r))-\sigma(r,0)\|^{2}+2\|\sigma(r,0)\|^{2} \\
	&\leq 2L^{2}_{\sigma}\|Y(r)\|^{2}+2\|\sigma(r,0)\|^{2}.
	\end{align*}
	Therefore, for all $t\in [0,T]$, we have
	\begin{align*}
	\textbf{E}\norm{\int_{0}^{t}(t-r)^{\alpha-1}\mathscr{E}_{\alpha-\beta,\alpha,\alpha}^{A,B}(t-r)\sigma(r,Y(r))\mathrm{d}W(r)}^{2}&\leq 2\mathscr{M}^{2}T^{2\alpha-2}L^{2}_{\sigma}\textbf{E}\int_{0}^{t}\|Y(r)\|^{2}\mathrm{d}r\\
	&+2\mathscr{M}^{2}T^{2\alpha-2}\int_{0}^{t}\|\sigma(r,0)\|^{2}\mathrm{d}r\\
	&\leq 2\mathscr{M}^{2}L_{\sigma}^{2}T^{2\alpha-1}\|Y(r)\|^{2}_{H^{2}}\\
	&+2\mathscr{M}^{2}T^{2\alpha-2}\int_{0}^{T}\|\sigma(r,0)\|^{2}\mathrm{d}r.
	\end{align*}
	This together with \eqref{Tnorm} and \eqref{1}-\eqref{bbb} yields that $\|\mathcal{T}_{\eta}Y\|^{2}_{H^{2}}< \infty$. Hence, the map $\mathcal{T}_{\eta}$ is well-defined.
\end{proof}
To prove global existence and uniqueness of solutions, we will show that the operator $\mathcal{T}_{\eta}$ is contractive with respect to the weighted maximum norm \eqref{maxnorm}. Now, we are in a position to prove Theorem \ref{thm2}.
 
\textbf{Proof of Theorem \ref{thm2}:} Let $T>0$ be an arbitrary. Choose and fix a positive constant $\omega$ such that 
\begin{equation}\label{omega}
\omega >4\Gamma(2\alpha-1)\mathscr{M}^{2}\Big(1+L_{b}^{2}T+L_{\sigma}^{2}\Big).
\end{equation}

(i) Choose and fix $\eta\in \Xi_{0}$. By virtue of Lemma \ref{welldef}, the operator $\mathcal{T}_{\eta}$ is well-defined. We will prove that the map $\mathcal{T}_{\eta}$ is a contraction with respect to the norm $\|\cdot\|_{\omega}$.

For this purpose, let $X, Y \in H^{2}([0,T],\mathbb{R}^{n})$ be arbitrary. From \eqref{T.oper} and the inequality \eqref{ineqq} with $n=3$, we derive the following estimations for all $t \in [0,T]$:
\begin{align*}
\textbf{E}\|(\mathcal{T}_{\eta}X)(t)-(\mathcal{T}_{\eta}Y)(t)\|^{2}&\leq 3\textbf{E}\norm{\int\limits_{0}^{t}(t-r)^{\alpha-1}\mathscr{E}_{\alpha-\beta,\alpha,\alpha}^{A,B}(t-r)\left( X(r)-Y(r)\right) \mathrm{d}r}^{2}\nonumber\\
&+3\textbf{E}\norm{\int\limits_{0}^{t}(t-r)^{\alpha-1}\mathscr{E}_{\alpha-\beta,\alpha,\alpha}^{A,B}(t-r)\left(b(r,X(r))-b(r,Y(r)) \right) \mathrm{d}r}^{2}\nonumber\\
&+3\textbf{E}\norm{\int\limits_{0}^{t}(t-r)^{\alpha-1}\mathscr{E}_{\alpha-\beta,\alpha,\alpha}^{A,B}(t-r)\left(\sigma(r,X(r))-\sigma(r,Y(r)) \right) \mathrm{d}W(r)}^{2}.
\end{align*}
By the Cauchy-Schwarz inequality, we have 
\begin{align*}
&\textbf{E}\norm{\int\limits_{0}^{t}(t-r)^{\alpha-1}\mathscr{E}_{\alpha-\beta,\alpha,\alpha}^{A,B}(t-r)\left( X(r)-Y(r)\right) \mathrm{d}r}^{2}\leq\mathscr{M}^{2}\int_{0}^{t}(t-r)^{2\alpha-2}\textbf{E}\|X(r)-Y(r)\|^{2}\mathrm{d}r.
\end{align*}
Using Cauchy-Schwarz inequality and Assumption \ref{A1}, we obtain
\begin{align*}
&\textbf{E}\norm{\int\limits_{0}^{t}(t-r)^{\alpha-1}\mathscr{E}_{\alpha-\beta,\alpha,\alpha}^{A,B}(t-r)\left(b(r,X(r))-b(r,Y(r)) \right) \mathrm{d}r}^{2}\leq \mathscr{M}^{2}L_{b}^{2}T\int_{0}^{t}(t-r)^{2\alpha-2}\textbf{E}\|X(r)-Y(r)\|^{2}\mathrm{d}r.
\end{align*}
Moreover, by It\^{o}'s isometry and Assumption \ref{A1}, we also have 
\begin{align*}
&\textbf{E}\norm{\int\limits_{0}^{t}(t-r)^{\alpha-1}\mathscr{E}_{\alpha-\beta,\alpha,\alpha}^{A,B}(t-r)\left(\sigma(r,X(r))-\sigma(r,Y(r)) \right) \mathrm{d}W(r)}^{2}\leq\mathscr{M}^{2} L_{\sigma}^{2} \int_{0}^{t}(t-r)^{2\alpha-2}\textbf{E}\|X(r)-Y(r)\|^{2}\mathrm{d}r.
\end{align*}
Then for all $t \in [0,T]$, we acquire
\begin{align*}
\textbf{E}\|(\mathcal{T}_{\eta}X)(t)-(\mathcal{T}_{\eta}Y)(t)\|^{2}&\leq 3\mathscr{M}^{2}\Big(1+L_{b}^{2}T+L_{\sigma}^{2}\Big)\int_{0}^{t}(t-r)^{2\alpha-2}\textbf{E}\|X(r)-Y(r)\|^{2}\mathrm{d}r,
\end{align*}
which together with the definition of $\|\cdot\|_{\omega}$ as in \eqref{maxnorm} implies that
\begin{align*}
\frac{\textbf{E}\|(\mathcal{T}_{\eta}X)(t)-(\mathcal{T}_{\eta}Y)(t)\|^{2}}{E_{2\alpha-1}(\omega t^{2\alpha-1})}&\leq 3\mathscr{M}^{2}\Big(1+L_{b}^{2}T+L_{\sigma}^{2}\Big)\frac{1}{E_{2\alpha-1}(\omega t^{2\alpha-1})} \\
&\times \int_{0}^{t}(t-r)^{2\alpha-2}E_{2\alpha-1}(\omega r^{2\alpha-1})\frac{\textbf{E}\|X(r)-Y(r)\|^{2}}{E_{2\alpha-1}(\omega r^{2\alpha-1})}\mathrm{d}r.
\end{align*}
By virtue of Lemma \ref{lemtech}, we have for all $t \in [0,T]$:
\begin{align*}
\frac{\textbf{E}\|(\mathcal{T}_{\eta}X)(t)-(\mathcal{T}_{\eta}Y)(t)\|^{2}}{E_{2\alpha-1}(\omega t^{2\alpha-1})}&\leq \frac{3\Gamma(2\alpha-1)}{\omega} \mathscr{M}^{2}\Big(1+L_{b}^{2}T+L_{\sigma}^{2}\Big)\|X-Y\|^{2}_{\omega}
\end{align*}
As a consequence, 
\begin{align*}
\|\mathcal{T}_{\eta}X-\mathcal{T}_{\eta}Y\|^{2}_{\omega} \leq \zeta\|X-Y\|^{2}_{\omega}
\end{align*}
 where 
 \begin{equation*}
  \zeta \coloneqq \frac{3\Gamma(2\alpha-1)}{\omega}\mathscr{M}^{2}\Big(1+L_{b}^{2}T+L_{\sigma}^{2}\Big).
 \end{equation*}
 By \eqref{omega}, we have $\zeta<1$ and the operator $\mathcal{T}_{\eta}$ is a contractive mapping on $H^{2}([0,T],\|\cdot\|_{\omega})$. Using the Banach's fixed point theorem, there exists a unique fixed point of this map in $H^{2}([0,T],\mathbb{R}^{n})$. This fixed point is also a unique solution of \eqref{fstoc} with initial conditions $X(0)=\eta$. The proof of (i) is complete.
 
(ii) Choose and fix $T>0$ and $\eta, \gamma \in \Xi_{0}$. Since $\varphi(\cdot, \eta)$ and $\varphi(\cdot, \gamma)$ are solution of \eqref{fstoc}, it follows that
\begin{align*}
\varphi(t,\eta)-\varphi(t,\gamma)&= (\eta -\gamma)\left( I+t^{\alpha}\mathscr{E}_{\alpha-\beta,\alpha,\alpha+1}^{A,B}(t)B\right)\\
&+\int\limits_{0}^{t}(t-r)^{\alpha-1}\mathscr{E}_{\alpha-\beta,\alpha,\alpha}^{A,B}(t-r)\left(\varphi(r,\eta)-\varphi(r,\gamma)\right) \mathrm{d}r\nonumber\\
&+\int\limits_{0}^{t}(t-r)^{\alpha-1}\mathscr{E}_{\alpha-\beta,\alpha,\alpha}^{A,B}(t-r)\left( b(r,\varphi(r,\eta))-b(r,\varphi(r,\gamma))\right)\mathrm{d}r\nonumber\\
&+\int\limits_{0}^{t}(t-r)^{\alpha-1}\mathscr{E}_{\alpha-\beta,\alpha,\alpha}^{A,B}(t-r)\left( \sigma(r,\varphi(r,\eta))-\sigma(r,\varphi(r,\gamma))\right)\mathrm{d}W(r).
\end{align*}
Hence, using the Jensen's inequality \eqref{ineqq} for $n=5$, Assumption \ref{A1} and \ref{A11}, the Cauchy-Schwarz inequality and It\^{o}'s isometry, we obtain
\allowdisplaybreaks
\begin{align*}
\textbf{E}\|\varphi(t,\eta)-\varphi(t,\gamma)\|^{2}
&\leq  4\mathcal{C}\textbf{E}\|\eta -\gamma\|^{2}\\
&+4\frac{\Gamma(2\alpha-1)}{\omega}\mathscr{M}^{2}\Big(1+L_{b}^{2}T+L_{\sigma}^{2}\Big)\int_{0}^{t}(t-r)^{2\alpha-2}\textbf{E}\|\varphi(r,\eta)-\varphi(r,\gamma)\|^{2} \mathrm{d}r,
\end{align*}
where $\mathcal{C}\coloneqq\left( I+t^{\alpha}\mathscr{E}_{\alpha-\beta,\alpha,\alpha+1}^{A,B}(t)B\right)^{2}$ .

By virtue of Lemma \ref{lemtech} and the definition of $\|\cdot\|_\omega$, we have 
\begin{align*}
\frac{\textbf{E}\|\varphi(t,\eta)-\varphi(t,\gamma)\|^{2}}{E_{2\alpha-1}(\omega t^{2\alpha-1})}&\leq 4\mathcal{C}\|\eta -\gamma\|^{2}_{\text{ms}}+ \zeta  \|\varphi(t,\eta)-\varphi(t,\gamma)\|^{2}_{\omega}.
\end{align*}

Thus, by \eqref{omega}, we have 
\begin{align*}
\Big(1-\zeta \Big)\|\varphi(t,\eta)-\varphi(t,\gamma)\|^{2}_{\omega}& \leq  4\mathcal{C}\|\eta -\gamma\|^{2}_{\text{ms}}.
\end{align*}
Hence,
\begin{align*}
\lim\limits_{\eta \to \gamma}\sup\limits_{t\in [0,T]}\|\varphi(t,\eta)-\varphi(t,\gamma)\|^{2}_{\text{ms}}=0.
\end{align*}
The proof is complete. \enspace $\square$

\section{Asymptotic separation between mild solutions of \eqref{fstoc}} \label{asymptotic}
\begin{thm}
Let $\eta, \gamma \in \Xi_{0}$ such that $\eta \neq \gamma$. Then for any $\epsilon >0$
\begin{equation*}
\limsup_{t\to \infty}t^{\alpha+\epsilon}\|\varphi(t, \eta)-\varphi(t, \gamma)\|_{\text{ms}}=\infty.
\end{equation*}
\end{thm}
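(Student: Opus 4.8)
The plan is to argue by contradiction. Suppose the conclusion fails, so that for some $\epsilon>0$ we have $\limsup_{t\to\infty} t^{\alpha+\epsilon}\|\varphi(t,\eta)-\varphi(t,\gamma)\|_{\text{ms}}=M<\infty$. Writing $\Phi(t):=\varphi(t,\eta)-\varphi(t,\gamma)$, this produces constants $C>0$ and $T_0>0$ with $\|\Phi(t)\|_{\text{ms}}\le C\,t^{-\alpha-\epsilon}$ for every $t\ge T_0$; in particular $\|\Phi(t)\|_{\text{ms}}\to0$. My starting point is the difference representation already established in the proof of Theorem \ref{thm2}(ii), which I would rearrange so as to isolate the initial-data-driven term:
\[ (\eta-\gamma)\big(I+t^{\alpha}\mathscr{E}_{\alpha-\beta,\alpha,\alpha+1}^{A,B}(t)B\big)=\Phi(t)-\mathcal{I}_1(t)-\mathcal{I}_2(t)-\mathcal{I}_3(t), \]
where $\mathcal{I}_1(t)$ and $\mathcal{I}_2(t)$ are the convolution integrals against $\Phi(r)$ and against $b(r,\varphi(r,\eta))-b(r,\varphi(r,\gamma))$ respectively, while $\mathcal{I}_3(t)$ is the stochastic integral against $\sigma(r,\varphi(r,\eta))-\sigma(r,\varphi(r,\gamma))$.

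Next I would bound the three correction terms in the mean-square norm. For $\mathcal{I}_1$ and $\mathcal{I}_2$, Minkowski's integral inequality together with the Lipschitz bound of Assumption \ref{A1} reduces their ms-norm to an expression controlled by $\int_0^t (t-r)^{\alpha-1}\big\|\mathscr{E}_{\alpha-\beta,\alpha,\alpha}^{A,B}(t-r)\big\|\,\|\Phi(r)\|_{\text{ms}}\,\mathrm{d}r$ (with an extra factor $L_b$ for $\mathcal{I}_2$). For the stochastic term, It\^o's isometry and Assumption \ref{A1} give
\[ \textbf{E}\|\mathcal{I}_3(t)\|^2\le L_\sigma^2\int_0^t (t-r)^{2\alpha-2}\big\|\mathscr{E}_{\alpha-\beta,\alpha,\alpha}^{A,B}(t-r)\big\|^2\,\|\Phi(r)\|_{\text{ms}}^2\,\mathrm{d}r. \]
The heart of the argument is then the tail analysis of these convolutions, which I would carry out by splitting $\int_0^t=\int_0^{T_0}+\int_{T_0}^{t/2}+\int_{t/2}^{t}$, using the boundedness of $\|\Phi\|_{\text{ms}}$ on the compact interval $[0,T_0]$ and the decay $\|\Phi(r)\|_{\text{ms}}\le C r^{-\alpha-\epsilon}$ on $[T_0,t]$. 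Here the hypothesis $\alpha\in(\frac{1}{2},1)$ is indispensable: it makes the exponent $2\alpha-2\in(-1,0)$, so the It\^o kernel is integrable, and the power counting on the near-diagonal piece $\int_{t/2}^{t}$ produces the decaying factor $(t/2)^{-\epsilon}$.

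Once all three correction terms are shown to be asymptotically negligible against the left-hand side, the displayed identity forces $\|\eta-\gamma\|_{\text{ms}}=0$, i.e. $\eta=\gamma$ almost surely, contradicting $\eta\neq\gamma$ and completing the proof. I expect the genuine obstacle to be the growth of the bivariate Mittag-Leffler kernel $\mathscr{E}_{\alpha-\beta,\alpha,\alpha}^{A,B}(t-r)$ as its argument tends to infinity: the finite-horizon constant $\mathscr{M}=\sup_{[0,T]}\|\mathscr{E}_{\alpha-\beta,\alpha,\alpha}^{A,B}\|$ exploited throughout Section \ref{stochastic} is of no use on $[0,\infty)$, and for non-permutable $A,B$ these functions grow super-polynomially, so the part of $\mathcal{I}_1$ near $r=0$ --- where the kernel is evaluated at its largest argument $t$ --- is not obviously small. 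Controlling this requires honest asymptotic growth estimates for $\mathscr{E}_{\alpha-\beta,\alpha,\alpha}^{A,B}$ together with a matching lower bound on the prefactor $I+t^{\alpha}\mathscr{E}_{\alpha-\beta,\alpha,\alpha+1}^{A,B}(t)B$. The argument closes transparently in the distinguished special case $A=B=0$ (so that $\beta$ disappears), where the prefactor is exactly $I$, the leading term is the constant $\eta-\gamma$, and every kernel collapses to the pure power $(t-r)^{\alpha-1}/\Gamma(\alpha)$, making the three tail estimates elementary.
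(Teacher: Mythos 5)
Your proposal follows the same overall scheme as the paper's own proof: negate the conclusion to obtain a decay bound $\|\varphi(t,\eta)-\varphi(t,\gamma)\|_{\text{ms}}\le C\,t^{-\alpha-\epsilon}$ for large $t$, substitute the integral representation of the difference of the two mild solutions, isolate the initial-data term $(\eta-\gamma)\bigl(I+t^{\alpha}\mathscr{E}^{A,B}_{\alpha-\beta,\alpha,\alpha+1}(t)B\bigr)$, and show that the three convolution terms vanish as $t\to\infty$, forcing $\eta=\gamma$. Where you differ is in how the convolutions are killed, and your variant is actually sharper. The paper estimates the two Lebesgue integrals by Cauchy--Schwarz after splitting at $r=t^{\delta}$, which requires choosing $\delta\in\bigl(\frac{\alpha}{\lambda},1-\alpha\bigr)$; such a $\delta$ exists only when $\lambda>\frac{\alpha}{1-\alpha}$, a condition strictly stronger than the bound $\lambda=\alpha+\epsilon>\alpha$ that the contradiction hypothesis actually supplies (the paper itself notes the equivalence with $\lambda>\frac{\alpha}{1-\alpha}$ but not the mismatch with the theorem as stated). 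Your Minkowski bound followed by the split $\int_0^{T_0}+\int_{T_0}^{t/2}+\int_{t/2}^{t}$ has no such constraint: the first piece is $O\bigl((t-T_0)^{\alpha-1}\bigr)$, the middle piece is $O\bigl(t^{\alpha-1}\max(t^{1-\alpha-\epsilon},\log t)\bigr)$, the near-diagonal piece is $O(t^{-\epsilon})$, and the It\^{o} term closes because $2\alpha-2>-1$ and $2\alpha-1>0$. So your power counting genuinely covers every $\epsilon>0$ and repairs a real defect in the published argument.

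The obstacle you flag at the end is also genuine, and the paper does not overcome it: its proof bounds $\|\mathscr{E}^{A,B}_{\alpha-\beta,\alpha,\alpha}(t-r)\|$ by $\mathscr{M}$, but $\mathscr{M}$ was defined as a supremum over the compact interval $[0,T]$, and the Mittag-Leffler-type kernel is unbounded on $[0,\infty)$; likewise the paper divides by $\mathcal{C}=\bigl(I+t^{\alpha}\mathscr{E}^{A,B}_{\alpha-\beta,\alpha,\alpha+1}(t)B\bigr)^{2}$ as if it were a positive scalar constant, which is precisely the missing lower bound on the prefactor that you identify. In other words, neither your outline nor the paper's proof is complete for general non-permutable $A,B$; the difference is that you state the missing ingredients explicitly (growth estimates at infinity for $\mathscr{E}^{A,B}$ and invertibility/lower bounds for $I+t^{\alpha}\mathscr{E}^{A,B}_{\alpha-\beta,\alpha,\alpha+1}(t)B$), and your argument does close rigorously in the case $A=B=0$, which is the setting of \cite{doan} discussed in the paper's concluding remark. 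If you want a complete proof at the paper's level of generality, those two estimates are what must be supplied; nothing else in your outline is problematic.
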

\begin{proof}
	Assume the contrary, i.e. there exists a positive constant $\lambda >\alpha$ such that
	\begin{equation*}
	\limsup_{t \to \infty} t^{\lambda}\|\varphi(t, \eta)-\varphi(t, \gamma)\|_{\text{ms}}< \infty,
	\end{equation*}
	for some $\eta, \gamma \in \Xi_{0}$, $\eta\neq \gamma$. There exists constants $T>0$ and $\kappa>0$ such that
	\begin{equation}\label{ineqlambda}
	\|\varphi(t, \eta)-\varphi(t, \gamma)\|^{2}_{\text{ms}}\leq \kappa t^{-2\lambda} \quad \text{for all} \quad t \geq T.
	\end{equation}
	From \eqref{integral equation} and the inequality \eqref{ineqq}, we have 
	\begin{align*}
	\|\eta -\gamma\|^{2}\leq \frac{1}{4\mathcal{C}}\|\varphi(t,\eta)-\varphi(t,\gamma)\|^{2}&+\frac{1}{\mathcal{C}}\norm{\int\limits_{0}^{t}(t-r)^{\alpha-1}\mathscr{E}_{\alpha-\beta,\alpha,\alpha}^{A,B}(t-r)\left( \varphi(r,\eta)-\varphi(r,\gamma)\right) \mathrm{d}r}^{2}\nonumber\\
	&+\frac{1}{\mathcal{C}}\norm{\int\limits_{0}^{t}(t-r)^{\alpha-1}\mathscr{E}_{\alpha-\beta,\alpha,\alpha}^{A,B}(t-r)\left( b(r,\varphi(r,\eta))-b(r,\varphi(r,\gamma))\right)\mathrm{d}r}^{2}\nonumber\\
	&+\frac{1}{\mathcal{C}}\norm{\int\limits_{0}^{t}(t-r)^{\alpha-1}\mathscr{E}_{\alpha-\beta,\alpha,\alpha}^{A,B}(t-r)\left( \sigma(r,\varphi(r,\eta))-\sigma(r,\varphi(r,\gamma))\right)\mathrm{d}W(r)}^{2}.
	\end{align*}
	%where $c=\left(1-\frac{A t^{\alpha-\beta}}{\Gamma(\alpha-\beta+1)}\right)^{2}$.

Taking expectation of both sides and using Assumption \ref{A1}, we obtain 
\begin{align*}
\|\eta -\gamma\|^{2}_{\text{ms}}\leq \frac{1}{4\mathcal{C}}\textbf{E}\|\varphi(t,\eta)-\varphi(t,\gamma)\|^{2}&+\frac{1}{\mathcal{C}}\textbf{E}\left( \int\limits_{0}^{t}(t-r)^{\alpha-1}\|\mathscr{E}_{\alpha-\beta,\alpha,\alpha}^{A,B}(t-r)\|\|\varphi(r,\eta)-\varphi(r,\gamma)\|\mathrm{d}r\right) ^{2}\nonumber\\
&+\frac{1}{\mathcal{C}}\textbf{E}\left( \int\limits_{0}^{t}(t-r)^{\alpha-1}\|\mathscr{E}_{\alpha-\beta,\alpha,\alpha}^{A,B}(t-r)\|L_{b}\|\varphi(r,\eta)-\varphi(r,\gamma)\|\mathrm{d}r\right) ^{2}\nonumber\\
&+\frac{1}{\mathcal{C}}\textbf{E}\left( \int\limits_{0}^{t}(t-r)^{\alpha-1}\|\mathscr{E}_{\alpha-\beta,\alpha,\alpha}^{A,B}(t-r)\|L_{\sigma}\|\varphi(r,\eta)-\varphi(r,\gamma)\|\mathrm{d}W(r)\right) ^{2}.
\end{align*}
From \eqref{ineqlambda}, we derive that $\lim_{t\to\infty}\textbf{E}\|\varphi(r,\eta)-\varphi(r,\gamma)\|^{2}=0$. Thus, to derive contradiction it is enough to show that 

\begin{equation}\label{I1}
\lim_{t\to\infty}\mathcal{I}_{1}(t)=0, \qquad \text{where}\quad  \mathcal{I}_{1}(t)\coloneqq \textbf{E}\left( \int\limits_{0}^{t}(t-r)^{\alpha-1}\|\mathscr{E}_{\alpha-\beta,\alpha,\alpha}^{A,B}(t-r)\|\|\varphi(r,\eta)-\varphi(r,\gamma)\|\mathrm{d}r\right) ^{2},
\end{equation}
\begin{equation}\label{I2}
\lim_{t\to\infty}\mathcal{I}_{2}(t)=0, \qquad \text{where}\quad  \mathcal{I}_{2}(t)\coloneqq \textbf{E}\left( \int\limits_{0}^{t}(t-r)^{\alpha-1}\|\mathscr{E}_{\alpha-\beta,\alpha,\alpha}^{A,B}(t-r)\|L_{b}\|\varphi(r,\eta)-\varphi(r,\gamma)\|\mathrm{d}r\right) ^{2}
\end{equation}
and 
\begin{equation}\label{I3}
\lim_{t\to\infty}\mathcal{I}_{3}(t)=0, \qquad \text{where}\quad  \mathcal{I}_{3}(t)\coloneqq \mathscr{M}^{2}L^{2}_{\sigma}\int_{0}^{t}(t-r)^{2\alpha-2}\|\varphi(r,\eta)-\varphi(r,\gamma)\|^{2}_{\text{ms}}\mathrm{d}r.
\end{equation}
To show \eqref{I1} and \eqref{I2}, choose and fix $\delta \in (\frac{\alpha}{\lambda}, 1-\alpha)$. Note that the existence of such a $\delta$ comes from the fact that $\frac{\alpha}{\lambda}< 1-\alpha$ that is equivalent to the assumption $\lambda>\frac{\alpha}{1-\alpha}$. For $t> \max\left\lbrace  T^{1/\delta},1\right\rbrace $, using Cauchy-Schwarz inequality and the inequality \eqref{ineqlambda}, we have  

\begin{align*}
\mathcal{I}_{1}(t)&\leq  2  \textbf{E}\left( \int\limits_{0}^{t^{\delta}}(t-r)^{\alpha-1}\|\mathscr{E}_{\alpha-\beta,\alpha,\alpha}^{A,B}(t-r)\|\|\varphi(r,\eta)-\varphi(r,\gamma)\|\mathrm{d}r\right) ^{2},\\
&+2 \textbf{E}\left( \int\limits_{t^{\delta}}^{t}(t-r)^{\alpha-1}\|\mathscr{E}_{\alpha-\beta,\alpha,\alpha}^{A,B}(t-r)\|\|\varphi(r,\eta)-\varphi(r,\gamma)\|\mathrm{d}r\right) ^{2},\\
&\leq 2\mathscr{M}^{2}\int_{0}^{t^{\delta}}(t-r)^{2\alpha-2}\mathrm{d}r\int_{0}^{t^{\delta}}\|\varphi(r,\eta)-\varphi(r,\gamma)\|^{2}_{\text{ms}}\mathrm{d}r\\
&+2\mathscr{M}^{2}\int_{t^{\delta}}^{t}(t-r)^{2\alpha-2}\mathrm{d}r\int_{t^{\delta}}^{t}\|\varphi(r,\eta)-\varphi(r,\gamma)\|^{2}_{\text{ms}}\mathrm{d}r.
\end{align*}
Since  
\begin{align*}
\int_{0}^{t^{\delta}}(t-r)^{2\alpha-2}\mathrm{d}r=\frac{t^{\delta}}{(t-t^{\delta})^{2-2\alpha}},\quad \int_{t^{\delta}}^{t}(t-r)^{2\alpha-2}\mathrm{d}r=\frac{(t-t^{\delta})^{2\alpha-1}}{2\alpha-1}.
\end{align*}
It yields together with \eqref{ineqlambda} that

\begin{align}\label{ineqI1}
\mathcal{I}_{1}(t)&\leq \frac{2\mathscr{M}^{2}t^{2\delta}}{(t-t^{\delta})^{2-2\alpha}}\sup_{r\in [0,T]}\|\varphi(r,\eta)-\varphi(r,\gamma)\|^{2}_{\text{ms}}+2\kappa \mathscr{M}^{2}\frac{(t-t^{\delta})^{2\alpha-1}}{2\alpha-1}\int_{t^{\delta}}^{t}r^{-2\lambda}\mathrm{d}r\nonumber\\
&\leq \frac{2\mathscr{M}^{2}t^{2\delta}}{(t-t^{\delta})^{2-2\alpha}}\sup_{r\in [0,T]}\|\varphi(r,\eta)-\varphi(r,\gamma)\|^{2}_{\text{ms}}+\frac{2\mathscr{M}^{2}\kappa (t-t^{\delta})^{2\alpha}}{(2\alpha-1)t^{2\delta\lambda}}.
\end{align}

Similarly, we also have
\begin{equation}\label{ineqI2}
\mathcal{I}_{2}(t)\frac{2\mathscr{M}^{2}L^{2}_{b}t^{2\delta}}{(t-t^{\delta})^{2-2\alpha}}\sup_{r\in [0,T]}\|\varphi(r,\eta)-\varphi(r,\gamma)\|^{2}_{\text{ms}}+\frac{2\mathscr{M}^{2}L^{2}_{b}\kappa (t-t^{\delta})^{2\alpha}}{(2\alpha-1)t^{2\delta\lambda}}. 
\end{equation}

By definition of $\delta$, we have $2\delta<2-2\alpha$ and $2\alpha<2\delta \lambda$. Hence, letting $t \to \infty$ in the equalities \eqref{ineqI1} and \eqref{ineqI2} yields $\lim\limits_{t \to \infty}I_{1}(t)=0$ and $\lim\limits_{t \to \infty}I_{2}(t)=0$. Therefore, \eqref{I1} and \eqref{I2} are proved. 
%Since 
%\begin{align*}
%\int_{0}^{t}(t-r)^{2\alpha-2}r^{-2\lambda}=t^{2\alpha-2\lambda-1}B(2\alpha-1,1-2\lambda)
%\end{align*}

%where $B$ is a beta function. It follows together with \eqref{I1} and \eqref{I2} that

Concerning the assertion \eqref{I3}, let $t \geq  T$ be arbitrary. By \eqref{ineqlambda}, we have
\begin{align*}
 \mathcal{I}_{3}(t)&\leq  \mathscr{M}^{2}L_{\sigma}^{2}\int_{0}^{T}(t-r)^{2\alpha-2}\|\varphi(r,\eta)-\varphi(r,\gamma)\|^{2}_{\text{ms}}\mathrm{d}r+ \kappa\mathscr{M}^{2}L_{\sigma}^{2}\int_{T}^{t}(t-r)^{2\alpha-2}r^{-2\lambda}\mathrm{d}r\\
 &\leq \mathscr{M}^{2}L_{\sigma}^{2}\frac{T}{(t-T)^{2-2\alpha}}\sup_{r\in [0,T]}\|\varphi(r,\eta)-\varphi(r,\gamma)\|^{2}_{\text{ms}}+ \kappa\mathscr{M}^{2}L_{\sigma}^{2}\int_{T}^{t}(t-r)^{2\alpha-2}r^{-2\lambda}\mathrm{d}r.
\end{align*}
Therefore, 
\begin{equation}\label{ineqI3}
\limsup_{t \to \infty}I_{3}(t)\leq \kappa\mathscr{M}^{2}L_{\sigma}^{2}\limsup_{t \to \infty}\int_{T}^{t}(t-r)^{2\alpha-2}r^{-2\lambda}\mathrm{d}r
\end{equation}
Note that for $t\geq 2T$, we have 
\begin{align*}
\int_{T}^{t}(t-r)^{2\alpha-2}r^{-2\lambda}\mathrm{d}r&= \int_{T}^{t/2}(t-r)^{2\alpha-2}r^{-2\lambda}\mathrm{d}r+\int_{t/2}^{t}(t-r)^{2\alpha-2}r^{-2\lambda}\mathrm{d}r\\
&\leq \frac{2^{2-2\alpha}}{t^{2-2\alpha}}\int_{T}^{t/2}r^{-2\lambda}\mathrm{d}r+\left( \frac{t}{2}\right) ^{-2\lambda}\int_{t/2}^{t}(t-r)^{2\alpha-2}\mathrm{d}r\\
&\leq \frac{2^{2-2\alpha}T^{-2\lambda+1}}{(2\lambda-1)t^{2-2\alpha}}+\frac{1}{2\alpha -1}\left( \frac{t}{2}\right) ^{2\alpha-2\lambda-1},
\end{align*}
which together with \eqref{ineqI3} and the fact that $\alpha \in (\frac{1}{2}, 1)$ and $\lambda>\frac{\alpha}{1-\alpha}>\alpha$, implies that $\lim_{t \to \infty}\mathcal{I}_{3}(t)=0$. Thus, \eqref{I3} is proved and therefore the proof is complete.
\end{proof} 

\begin{remark}
		In special case, $\beta =0$ and $A,B=\Theta$ are zero matrices, the system \eqref{fstoc} can be reduced to the following Caputo fractional stochastic differential equation (Caputo FSDE):
		\begin{equation}\label{fsde}
		\begin{cases}
		\left( \prescript{C}{}D_{0^{+}}^{\alpha}X \right) (t)=b(t,X(t))+\sigma(t, X(t))\frac{\mathrm{d}W(t)}{\mathrm{d}t}, \\
		X(0)=\eta,
		\end{cases}
		\end{equation}
		in which asymptotic separation between solutions of above Caputo FSDE has been discussed with the help of Theorem 2 in \cite{doan}. The main point in \cite{doan} is that $\lambda$ is chosen as $\lambda > \frac{2\alpha}{1-\alpha}$. To the best of our knowledge, we study asymptotic separation between mild solutions rather than integral equations. Moreover, unlike Cong et al. \cite{doan}, we consider more general Caputo FSDEs with non-permutable matrices under weaker condition $\lambda>\alpha$ than \cite{doan}. With regards to this condition, the asymptotic separation between the solutions is greater than $t^{-\alpha-\epsilon}$ as $t \to \infty$ for any $\epsilon>0$.
\end{remark}

\section{Example} \label{sec:example}
Now we provide examples to support the theory developed in the previous sections. We consider the nonlinear fractional stochastic equation system with $\alpha =\frac{3}{4}$ and $\beta=\frac{1}{4}$

\begin{align}\label{43}
&\left( \prescript{C}{}D^{3/4}_{0^{+}}X\right) (t)-\begin{pmatrix} 
0.1 & 0.2 \\
0.3 & 0.4 
\end{pmatrix} \left( \prescript{C}{}D_{0^{+}}^{1/4} X \right) (t)-\begin{pmatrix} 
0.4 & 0.1 \\
0.2 & 0.3 
\end{pmatrix} X(t)=  \begin{bmatrix} \sin X_{1} \\  X_{2}+5 \end{bmatrix}
+ \begin{bmatrix} X_{1}+5 \\  \cos X_{2} \end{bmatrix}\frac{\mathrm{d}W(t)}{\mathrm{d}t}, \quad t \in  [0,1], \\ 
&X(0)= \begin{bmatrix} 3 \\ 5 \end{bmatrix},\nonumber
\end{align}

where $X(t)=(X_{1}(t),X_{2}(t))^{T}$ and $W(t)$ is a Wiener process. By comparison with \eqref{fstoc} we have 

\begin{align*}
&A=
\begin{pmatrix} 
0.1 & 0.2 \\
0.3 & 0.4 
\end{pmatrix} ,
B=
\begin{pmatrix} 
0.4 & 0.1 \\
0.2 & 0.3 
\end{pmatrix} , \\
\quad &b(t,X(t)) =  \begin{bmatrix} \sin X_{1} \\  X_{2}+5 \end{bmatrix}, \quad \sigma(t,X(t))= \begin{bmatrix} X_{1}+5 \\  \cos X_{2} \end{bmatrix}.
\end{align*}
It is obvious that $AB\neq BA$. Then, the unique mild solution of \eqref{43} involving non-permutable matrices is given by 

\begin{align*}
X(t)= &\left( I+t^{\frac{3}{4}}\mathscr{E}_{\frac{1}{2},\frac{3}{4},\frac{7}{4}}^{A,B}(t)B\right) X(0)+\int\limits_{0}^{t}(t-r)^{-\frac{1}{4}}\mathscr{E}_{\frac{1}{2},\frac{3}{4},\frac{3}{4}}^{A,B}(t-r)X(r)\mathrm{d}r\nonumber\\
&+\int\limits_{0}^{t}(t-r)^{-\frac{1}{4}}\mathscr{E}_{\frac{1}{2},\frac{3}{4},\frac{3}{4}}^{A,B}(t-r)b(r,X(r))\mathrm{d}r\nonumber\\
&+\int\limits_{0}^{t}(t-r)^{-\frac{1}{4}}\mathscr{E}_{\frac{1}{2},\frac{3}{4},\frac{3}{4}}^{A,B}(t-r)\sigma(r,X(r))\mathrm{d}W(r).
\end{align*}

It is clear that $b(t,X(t)),\sigma (t,X(t))$ satisfy Assumption \ref{A1} and \ref{A11}. 
All the hypotheses of Theorem 4.1  are so verified and hence, has a mild solution on $H^{2}_{\eta}([0, 1],\mathbb{R}^{n})$. Then we verify the asymptotic separation property between two various solutions of Caputo SMTDEs \eqref{fstoc} with more general condition based on $\lambda > \frac{3}{4}$.

\section{Conclusion} \label{concl}
In this paper we have studied Caputo SMTDEs with matrix coefficients that are not permutable. The fractional orders of differentiation $\alpha$ and $\beta$ are also assumed to be $(\frac{1}{2},1)$ and $(0,1)$, respectively. But they are completely independent of each other. 

The core of this paper is to study asymptotic separation of two mild solutions involving non-permutable matrices of fractional stochastic multi-term differential equations and to provide a proof of the existence and uniqueness of \eqref{fstoc} under some natural assumptions on the coefficients. Moreover, we also determine the asymptotic separation between two different solutions of \eqref{fstoc}, where the asymptotic distance $\infty$ is as $t \to \infty$ when $\lambda > \alpha$, implying that $\lambda$ does not depend on $\beta$. 

As a consequence of the main theorems constructing solution functions to Caputo SMTDE systems, by comparing these results with some existing results in the literature proved from the constant coefficients point of view, we were able to find a more general condition based on $\lambda$ which is valid for a class of fractional stochastic differential equations with a single derivative of fractional order. This is a lucky consequence which forms an interesting result in its own right. 

Although the asymptotic separation of two distinct mild solutions have now been constructed, there remain many other interesting open problems to be considered regarding asymptotics of the solution functions which may be studied by methods analogous to those used for computing asymptotics of Mittag-Leffler type functions in the univariate case. 

Other related directions of research may include more deeply the various relevant function spaces may be useful in the qualitative theory of fractional stochastic differential equations
related to these operators, for example well-posedness and regularity theory \cite{doan-kloeden}.

\section*{Acknowledgement}
The author declares that there is no funding information available.


\begin{thebibliography}{99}
	 \bibitem{oksendal}
	B. Oksendal, Stochastic Differential Equations: An Introduction with Applications, Springer-Verlag, Heidelberg, 2000.
	
	\bibitem{ito}
	K. It\^{o}, Stochastic Differential Equations, Memoirs of the American Mathematical Society, 4(1951) 1-51.
	
	\bibitem{prato}
	G.D. Prato, J. Zabczyk, Stochastic equations in infinite dimensions, Cambridge University Press, Cambridge, 1992.
 

\bibitem{tien}
D.N. Tien,  Fractional stochastic differential equations with applications to finance. J. Math. Anal. Appl. 397(2013) 334–348.
\bibitem{farhadi}
A. Farhadi, G.H. Erjaee, M. Salehi, Derivation of a new Merton’s optimal problem presented by fractional stochastic stock price and its applications. Comput. Math. Appl. 73(2017) 2066–2075.
\bibitem{wang}
Y. Wang, S. Zheng, W. Zhang, J. Wang, Complex and entropy of fluctuations of agent-based interacting financial dynamics with random jump. Entropy. 19(2017) 512. https://doi.org/10.3390/e19100512.

\bibitem{gangaram}
P. C. Jean, L.S. Gangaram, Stochastic fractional differential equations: Modeling, method and analysis, Chaos, Solitons \& Fract. 45(2012) 279-293.
\bibitem{mandelbrot}
B. Mandelbrot, J. Van Ness, Fractional Brownian motions, fractional noises and applications. SIAM Rev. 10(1968) 422–437.
\bibitem{arzu}
A. Ahmadova, N.I. Mahmudov, Existence and uniqueness results for a class of fractional stochastic neutral differential equations, Chaos Solitons\&Fract. 139 (2020) https://doi.org/10.1016/j.chaos.2020.110253.

\bibitem{rodkina}
A.E. Rodkina, On existence and uniqueness of solution of stochastic differential equations with heredity, Stochastic Monographs, 12(1984) 187-200.

\bibitem{taniguchi}
T. Taniguchi, K. Liu, A. Truman, Existence and uniqueness and asymptotic behavior of mild solutions to stochastic functional differential equations in Hilbert spaces, Differ. Equ. 181(2002) 72-91.

\bibitem{barbu}
D. Barbu, Local and global existence for mild solutions of stochastic  differential equations, Port, Math 55(1998) 411-424.

%\bibitem{feng-li}
%F. Jiang, Y. Shen, A note on the existence and uniqueness of mild solutions to neutral stochastic partial differential equations with non-Lipschitz coefficients, Computers and Mathematics with Applications, 61 (2011) 1590-1594.

\bibitem{luo2}
R. Sakthivel and J. Luo, Asymptotic stability of impulsive
stochastic partial differential equations with infinite delays,
J. Math. Anal. Appl., 356 (2009) 1–6.
 \bibitem{shen}
G. Shen, R. Sakthivel, Y. Ren, M. Li, Controllability and stability of fractional stochastic functional
systems driven by Rosenblatt process, Collect. Math. 71 (2020) 63–82;
https://doi.org/10.1007/s13348-019-00248-3.
%\bibitem{arzu-ismail-mahmudov}
%A. Ahmadova, I.T. Huseynov, N.I.Mahmudov, Controllability of fractional stochastic delay dynamical systems, Proceed. Inst. Math. Mech. ANAS, 46(2)(2020) 294–320. DOI: 10.29228/proc.34.

\bibitem{mao2}
X. Mao, Exponential Stability of Stochastic Differential Equations, Marcel Dekker, New York, 1994.

 %\bibitem{ahmadova-mahmudov}
%A. Ahmadova, N.I. Mahmudov, Ulam-Hyers stability of Caputo type stochastic neutral differential equations, Stat. Probab. Lett. 108949. https://doi.org/10.1016/j.spl.2020.108949.
 \bibitem{wangj}
J. Wang, L. Lv, Y. Zhou, New concepts and results in stability of fractional differential equations, Commun. Nonlinear Sci. Numer. Simul. 17 (2012) 2530-2538. https://doi.org/10.1016/j.cnsns.2011.09.030.
    \bibitem{arzu-sle}
    A. Ahmadova, N.I. Mahmudov, Strong convergence of a Euler–Maruyama method for fractional stochastic Langevin equations, Math. Comput. Simul. 190 (2021) 429–448.
    
      \bibitem{doan}
    D.T. Son, P.T. Huong, P.E. Kloeden, H.T. Tuan, Asymptotic separation between solutions of Caputo fractional stochastic differential equations. Stoc.Anal.Appl. 36(2018) 654-664. https://doi.org/10.1080/07362994.2018.1440243.
    
    \bibitem{li-hu}
    M. Li, C. Huang, Y. Hu, Asymptotic separation for stochastic Volterra integral equations with doubly singular kernels, Appl. Math. Lett., 113 (2021) 106880.
    
    
     	\bibitem{cong}
     N.D. Cong, T.S. Doan, H.T. Tuan, Asymptotic Stability of Linear Fractional Systems with Constant Coefficients and Small Time-Dependent Perturbations, Vietnam J. Math. 46 (2018) 665–680.
     
     \bibitem{stefan}
     N.D. Cong, T.S. Doan, S. Sigmund, H.T. Tuan, Linearized asymptotic stability for fractional differential equations, Electron. J. Qual. Theory Differ. Equ. 39, 1–13 (2016).
    
     \bibitem{arzu_asymptoticstability}
     A. Ahmadova, N.I. Mahmudov, Asymptotic stability analysis of Riemann-Liouville fractional stochastic neutral differential equations, arXiv preprint arXiv:2109.11493, accepted in Miskolc Math. Notes.
     \bibitem{mahmudov-sakthivel}
     R. Sakthivel, P. Revathi, and N.I. Mahmudov, Asymptotic stability of
     fractional stochastic neutral differential equations with infinite delays, Abstr. Appl. Anal., 51 (2012) 1-9.
     http://dx.doi.org/10.1155/2013/769257.
     \bibitem{luo1}
     R. Sakthivel and J. Luo, Asymptotic stability of nonlinear
     impulsive stochastic differential equations, Stat Probab Lett., 79(2009) 1219–1223.
    
 \bibitem{arzu-ismail}
A. Ahmadova, I.T. Huseynov, A. Fernandez, N.I. Mahmudov, Trivariate Mittag-Leffler functions used to solve multi-order systems of fractional differential equations, Commun. Nonlinear Sci. Numer. Simulat., 105735, 2021, https://doi.org/10.1016/j.cnsns.2021.105735.
%\bibitem{ismail-arzu}
%I.T. Huseynov, A. Ahmadova, A. Fernandez, N.I. Mahmudov, Explicit analytic solutions of incommensurate fractional differential equation systems, Appl. Math. Comput., 390, 2021, https://doi.org/10.1016/j.amc.2020.125590.
\bibitem{arran-joel}
A. Fernandez, J. E Restrepo, D. Suragan, A new representation for the solutions of fractional differential equations with variable coefficients, arXiv preprint arXiv:2105.00870.
	
\bibitem{podlubny}
I. Podlubny, Fractional Differential Equations, Academic Press, New York, 1999.

\bibitem{miller-ross}
K.S. Miller, B. Ross, An Introduction to the Fractional Calculus and Fractional Differential Equations, Wiley, New York, 1993.
\bibitem{kilbas}
A.A. Kilbas, H.M. Srivastava, J.J. Trujillo, Theory and applications of fractional differential equations, Elsevier Sceince B.V. 2006.
\bibitem{samko}
S. G. Samko, A.A. Kilbas, O.I. Marichev, Fractional Integrals and Derivatives: Theory and Applications, Gordon and Breach,  New York, 1993.
\bibitem{oldham-spanier}
K.B. Oldham, J. Spanier, The Fractional Calculus, Academic Press, San Diego, 1974.
\bibitem{nai}
N.I. Mahmudov, A. Ahmadova, I.T Huseynov, A new technique for solving Sobolev type fractional multi-order evolution equations, arXiv preprint arXiv:2102.10318, accepted in Comput. Appl. Math.
\bibitem{arran-ozarslan}
  A.  Fernandez,  C.  Kürt,  M.A.  Özarslan,  A  naturally  emerging  bivariate  Mittag-Leffler  function  and  associated  fractional-calculus  operators,  Comput.  Appl. Math.  39  (2020)  200,  doi: 10.1007/s40314- 020- 01224- 5 
\bibitem{doan-kloeden}
P.T. Huong, P.E. Kloeden, D. T. Son, Well-posedness and regularity for solutions of Caputo stochastic fractional differential equations in $L^{p}$  spaces, Stoch Anal Appl., DOI:  10.1080/07362994.2021.1988856.

	
	


	
	
\end{thebibliography}
   \end{document}